\theoremstyle{plain}
\newtheorem{thm}{Theorem}[section]
\newtheorem{prop}[thm]{Proposition}
\newtheorem{lemma}[thm]{Lemma}
\newtheorem{corollary}[thm]{Corollary}
\newtheorem{property}[thm]{}
\newtheorem*{ThmInf*}{Theorem \ref{Thm_buildingAtInfinity}}
\theoremstyle{definition}
\newtheorem{definition}[thm]{Definition}
\theoremstyle{remark}
\newtheorem{remark}[thm]{Remark}
\numberwithin{equation}{thm}
\newcommand{\R}{\mathbb{R}}
\newcommand{\define}{\mathrel{\mathop:}=}
\newcommand{\ddefine}{\mathrel{=\mathop:}}
\newcommand{\MS}{\mathbb{A}} 
\newcommand{\App}{\mathcal{A}}
\newcommand{\seg}{\mathrm{seg}} 
\newcommand{\binfinity}{\partial_\App} 
\newcommand{\RS}{\Phi}
\newcommand{\sW}{\overline{W}} 
\newcommand{\aW}{W} 
\newcommand{\WT}{{\aW}_T}
\newcommand{\Cf}{\mathcal{{C}}_{f}} 
\begin{document}

\hypersetup{pdfauthor={Petra N. Schwer},pdftitle={Axioms of affine buildings}}.
\title{Axioms of affine buildings}
\author{Petra N. Schwer (n\'ee Hitzelberger)}
\address{Fachbereich Mathematik und Informatik, Universit\"at M\"unster,
Einsteinstrasse~62, 48149 M\"unster, Germany}
\email{hitzelberger@uni-muenster.de}

\thanks{ I would like to thank Koen Struyve for many helpful comments. The author was financially supported by the SFB 478 ``Geometric structure in mathematics'' at the University of M\"unster.} 

\maketitle

\begin{abstract}
\vspace{1ex}
We prove equivalence of certain axiom sets for affine buildings. 
Along the lines a purely combinatorial proof of the existence of a spherical building at infinity is given. As a corollary we obtain that ``being an affine building'' is independent of the metric structure of the space.
\end{abstract}

\section{Introduction}
\label{sec_introduction}

Verifying that an object satisfies a certain list of axioms can sometimes be a problem hard to tackle. Once in a while one might wish that there is a shorter equivalent axiom set suited better for the purpose of a given problem. While working on a different project \cite{BaseChange} we had to verify that a certain space is a generalized affine building in the sense of Bennett \cite{Bennett}. This was the motivation to prove the main result of the present paper, that is Theorem~\ref{MainThm}. 

Adding one axiom to Tits' list defining non-discrete $\R$ buildings, Bennett was able to generalize the concept to arbitrary ordered abelian groups. In \cite{Bennett} and \cite{BennettDiss} he defined generalized affine buildings giving a list of six axioms. Later, for their proof of the Margulis conjecture in \cite{KramerTent}, Kramer and Tent made use of the theory of generalized affine buildings. Recently they have been studied by the author in \cite{Diss} and \cite{Convexity2}.

Our purpose is to study equivalent sets of axioms for generalized affine buildings. We will reduce the number of axioms and obtain that a universal definition for both $\R$-buildings and affine buildings defined over arbitrary Krull-valuated fields can be given.
From our main result we deduce that the building structure does not depend on its metric. In other words, whichever metric one might impose on the model apartment, the induced distance function on the affine building will be a metric. In particular does the induced metric always satisfy the triangle inequality.

In \cite{Bennett, Brown} or \cite{Parreau} the triangle inequality is solely used to prove existence of a spherical building at infinity. We were able to find an equivalent definition of parallelism of Weyl simplices which is  purely combinatorial and does not build on the metric structure of the affine building.
This helps us to prove that the building structure does not depend on the metric imposed on the apartment level.

Equivalent sets of axioms for affine $\R$- buildings have been previously studied by Anne Parreau \cite{Parreau}. This paper extends her results. 
Further did the author have access to a preprint by Curt Bennett \cite{Bennett2} which is also devoted to a reduction of the axioms of a generalized affine buildings. He did replace the difficult to verify (A6) by easier alternatives.

The original axiomatic definition of affine buildings is due to Jaques Tits. He defined the ``syst{\`e}me d'appartements'' in \cite{TitsComo} by listing five axioms. The first four of these are precisely axioms $(A1)-(A4)$ as presented in the following section. His fifth axiom originally reads different from ours but was later replaced with what is now axiom (A5) in Definition~\ref{Def_LambdaBuilding}.
The interested reader can find a short history of Tits' axioms in Marc Roman's book \cite{Ronan}. 
As already mentioned above in 1994 Bennett introduced the notion of a generalized affine building, by adding an additional axiom to Tits' list. He gave an example showing that the new axiom (A6) might not be omitted.

Assuming that the metric induced by the Euclidean distance on one apartment satisfies the triangle inequality, 
Anne Parreau later proved equivalence of (A5) and (A6) in case $\Lambda=\R$. 
In her proof the triangle inequality is needed to show the existence of the spherical building at infinity. In fact each known proof of the existence of the spherical building at infinity uses, in one way or another, the retraction appearing in axiom (A5) or the triangle inequality for the distance function on the building $X$, which is proved using (A5).

Axiom (A5) being equivalent to (A6) plus triangle inequality  in case $\Lambda=\R$ suggest that we should find a  purely combinatorial proof of the existence of the building at infinity in oder to obtain that axiom (A5) is superfluous in Definition~\ref{Def_LambdaBuilding}. This is carried out in Section~\ref{Sec_infinity}.

Besides the alternative proof of the existence of a building at infinity we will, in this short note, mainly discuss alternative sets of axioms for generalized affine buildings. In the following subsection we define generalized affine buildings and list the properties in consideration. 
For details we refer the reader to \cite{Diss} and \cite{Bennett}.

\subsection{Equivalence of axioms}

The model apartment of a generalized affine building  is defined by means of a (not necessarily crystallographic) spherical root system $\RS$ and a  totally ordered abelian group $\Lambda$. As the apartments of Euclidean buildings are isomorphic copies of $\R^n$ so is the \emph{model space} $\MS$ of a generalized affine building isomorphic to $\Lambda^n$.
We define 
$$
 \MS(\RS,\Lambda) = \mathrm{span}_F(\RS)\otimes_F \Lambda,
$$ 
where $F$ is a sub-field of the reals containing all evaluations of co-roots on roots. 

The spherical Weyl group $\sW$ associated to $\RS$ acts on $\MS$. A \emph{hyperplane} $H_\alpha$ in the model space is a fixed point set of a reflection $r_\alpha$ in $\sW$ which separates $\MS$ into two half-spaces, called \emph{half-apartments}.
There is as well an affine Weyl group $\WT$ acting on $\MS$, which is the semi-direct product of $\sW$ by some $\sW$ invariant translation group $T$ of the model space. In case the translation group $T$ is the entire space $\MS$ we write $\aW$ instead of $\WT$. 

Associated to a basis $B$ of the root system $\RS$ there is a \emph{fundamental Weyl chamber} $\Cf$. The chamber $\Cf$ is a fundamental domain for the action of $\sW$ on $\MS$ and its images under the affine Weyl group are the \emph{Weyl chambers} in $\MS$. A \emph{Weyl simplex} is a face of a Weyl chamber. The smallest face of dimension 0 is called \emph{basepoint}.

One can endow $\MS$ with a natural $\aW$-invariant metric taking its values in $\Lambda$ and making $\MS$ a $\Lambda$-metric space in the sense of Definition \ref{Def_metric}. 

\begin{definition}\label{Def_LambdaBuilding}
Let $X$ be a set and $\App$ a collection of injective charts $f:\MS\hookrightarrow X$.
We call the images $f(\MS)$  of the charts $f$ in $\App$ \emph{apartments} of $X$ and we define \emph{Weyl simplices, hyperplanes, half-apartments, ... of $X$} to be images of such in $\MS$ under a chart in $\App$. The set $X$ is a \emph{(generalized) affine building} with \emph{atlas} $\App$ if the following conditions are satisfied
\begin{enumerate}[label={(A*)}, leftmargin=*]
\item[(A1)] The atlas is invariant under pre-composition with elements of  $ \WT$.
\item[(A2)] Given two charts $f,g\in\App$ with $f(\MS)\cap g(\MS)\neq\emptyset$. Then $f^{-1}(g(\MS))$ is a closed convex subset of $\MS$ and there exists $w\in \WT$ with $f\vert_{f^{-1}(g(\MS))} = (g\circ w )\vert_{f^{-1}(g(\MS))}$.
\item[(A3)] For any pair of points  in $X$ there is an apartment containing both.
\end{enumerate}
Given a $\Lambda$-metric on the model space,  axioms $(A1)-(A3)$ imply the existence of a $\Lambda$-valued distance on $X$, that is a function $d:X\times X\mapsto \Lambda$ satisfying all conditions of the definition in \ref{Def_metric} but the triangle inequality.  The distance of points $x,y$ in $X$ is the distance of their preimages under a chart $f$ of an apartment containing both.
\begin{enumerate}[label={(A*)}, leftmargin=*]
\item[(A4)] Given two  Weyl chambers  in $X$ there exist sub-Weyl chambers of both which are contained in a common apartment. 
\item[(A5)] For any apartment $A$ and all $x\in A$ there exists a \emph{retraction} $r_{A,x}:X\to A$ such that $r_{A,x}$ does not increase distances and $r^{-1}_{A,x}(x)=\{x\}$.
\item[(A6)] Let $f, g$ and $h$ be charts such that the associated apartments pairwise intersect in half-apartments. Then $f(\MS)\cap g(\MS)\cap h(\MS)\neq \emptyset$. 
\end{enumerate}
By $(A5)$ the distance function $d$ on $X$ is well defined and satisfies the triangle inequality.
\end{definition}

The main goal of the present paper is to prove equivalence of certain sets of axioms. Let us therefore collect all properties which are necessary to state the main result.

\begin{itemize}
 \item[(EC)] Given two apartments $A$ and $B$ intersecting in a half-apartment $M$ with boundary wall $H$, then $(A\bigoplus B)\cup H$ is also an apartment, where $\bigoplus$ denotes the symmetric difference. 
\end{itemize}

We say that two Weyl simplices $S$ and $T$ \emph{share the same germ} if both are based at the same vertex and if $S\cap T$ is a neighborhood of $x$ in $S$ and in $T$.
It is easy to see that this is an equivalence relation on the set of Weyl simplices based at a given vertex. The equivalence class of $S$, based at $x$, is denoted by $\Delta_x S$ and is called the \emph{germ of $S$ at $x$}.

A germ $\mu$ of a Weyl chamber $S$ at $x$ is \emph{contained} in a set $Y$ if there exists $\varepsilon\in\Lambda^{+}$ such that $S\cap B_\varepsilon(x)$ is contained in $Y$.

\begin{itemize}
 \item[(A3')] Any two germs of Weyl chambers are contained in a common apartment. 
 \item[(A3'')] For all points $x$ and $y$-based Weyl chambers $S$ there exists an apartment containing both $x$ and $\Delta_yS$.
 \item[(GG)] Any two germs of Weyl chambers based at the same vertex are contained in a common apartment.
\end{itemize}

We will be able to prove that under certain assumptions the set $\Delta_xX$ of all germs of Weyl simplices at a fixed point $x$ in $X$ carries the structure of a spherical building. The germs of Weyl chambers will be the chambers in $\Delta_xX$.
We say that two germs of Weyl chambers are \emph{ opposite at $x$} if they are opposite as chambers in the building $\Delta_xX$. 

\begin{itemize}
 \item[(CO)] Two Weyl chambers $S$ and $T$,  which are based at the same vertex $x$ and whose germs are opposite at $x$,  are contained in a unique common apartment.
\end{itemize}

The segment $\seg(x,y)$ of points $x$ and $y$ in a metric space $X$ is the set of points $z$ such that $d(x,y)=d(x,z)+d(z,y)$. Let $A$ be an apartment in an affine building  containing two points $x$ and $y$. We write $\seg_A(x,y)$ for the intersection of $\seg(x,y)$ with $A$. 
 
\begin{itemize}
 \item[(FC'')] For all triples of points $x,y$ and $z$ in $X$ and all apartments $A$ containing $x$ and $y$ the segment $\seg_A(x,y)$ is contained in a finite union of Weyl chambers based at $z$.
\end{itemize}

\begin{remark}
Property (EC) was introduced by Bennett \cite{Bennett2} as an alternative to the sixth axiom.
Axiom (A3') is a stronger version of (A3) and the precise analog of the simplicial condition that two (affine) chambers are always contained in a common apartment. Both, (A3') and property  (GG), were introduced by Parreau \cite{Parreau}. 
Property (CO) did as well appear in \cite{Parreau} first.
Axiom (A3'') is 'in between' (A3) and (A3') and suffices for one of the implications in \ref{MainThm}. 
In \cite{Convexity2} we used a slightly stronger version of property (FC'') to prove that certain retractions are distance diminishing. However, Koen Struyve noticed that (FC'') suffices for our purposes.
\end{remark}

We say that $(X,\App)$ is \emph{a space modeled on $\MS$} if $X$ is a set together with a collection $\App$ of injective \emph{charts} $f:\MS\hookrightarrow X$ such that $X$ is covered by its charts. That is $X=\bigcup_{f\in\App} f(\MS)$.

\begin{thm}\label{MainThm}
For a space $(X,\App)$ modeled on $\MS=\MS(\RS,\Lambda)$ which satisfies axioms (A1)-(A3), the following are equivalent:
\begin{enumerate}
 \item\label{i} $(X,\App)$ is a generalized affine building, that is axioms (A4), (A5) and (A6) are satisfied.
 \item\label{ii} Axioms (A4), (A5) and (EC) hold.
 \item\label{iii} Axioms (A4) and (A6) are satisfied.
 \item\label{iv} Properties (GG) and (CO) hold.
 \item\label{v} The pair $(X,\App)$ has properties (A3') and (CO).
 \item\label{vi} Axioms (A3''), (A4) and properties (FC'') and (EC) are satisfied.
\end{enumerate}
\end{thm}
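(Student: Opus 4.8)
The plan is to prove the six statements equivalent by running a single cycle of implications,
$$(\ref{i})\Rightarrow(\ref{iii})\Rightarrow(\ref{v})\Rightarrow(\ref{iv})\Rightarrow(\ref{vi})\Rightarrow(\ref{ii})\Rightarrow(\ref{i}).$$
The conceptual heart of every nontrivial link is a spherical building read off from germ data: the local building $\Delta_x X$ of germs of Weyl simplices at a point, whose chambers are the germs of Weyl chambers, and the spherical building at infinity $\binfinity X$ provided by Theorem~\ref{Thm_buildingAtInfinity}. The decisive feature is that this building can be produced \emph{combinatorially}, so that opposition of germs, strong transitivity, and ultimately the retraction of (A5) are obtained from incidence geometry rather than from the metric.

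Three of the links are essentially formal. The implication $(\ref{i})\Rightarrow(\ref{iii})$ merely forgets (A5). The implication $(\ref{v})\Rightarrow(\ref{iv})$ holds because (A3') places \emph{any} two germs of Weyl chambers in a common apartment, hence in particular two germs based at the \emph{same} vertex, which is (GG); property (CO) is common to both. For $(\ref{ii})\Rightarrow(\ref{i})$ I only need to derive (A6) from (EC) in the presence of (A4) and (A5): given three charts whose apartments pairwise meet in half-apartments, I would apply the symmetric-difference construction of (EC) to two of them and intersect the resulting apartment with the third, using (A5) to control the intersection and force a common point, which is exactly (A6).

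The step $(\ref{iv})\Rightarrow(\ref{vi})$ is the first place where the germ building does real work. From (GG) I would verify that $\Delta_x X$ satisfies the axioms of a spherical building, and from (CO) that opposite germs at $x$ span a unique apartment; together these give enough transitivity to pass from germs at a common vertex to the point-plus-germ configurations of (A3'') and to the sub-Weyl-chamber statement (A4). Property (EC) comes from the apartment produced by a pair of opposite germs via (CO), and (FC'') is the finiteness observation that, inside a single apartment $A$, the segment $\seg_A(x,y)$ meets only finitely many Weyl-chamber directions based at $z$, and so is covered by finitely many such chambers.

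The two genuinely hard links are $(\ref{iii})\Rightarrow(\ref{v})$ and $(\ref{vi})\Rightarrow(\ref{ii})$, and both rest on Theorem~\ref{Thm_buildingAtInfinity}. For $(\ref{iii})\Rightarrow(\ref{v})$ I would use (A4) together with (A6) --- whose half-apartment intersections are precisely what forces the exchange condition in $\Delta_x X$ --- to realize $\Delta_x X$ as a spherical building, and then upgrade (A4) to the germ-transitivity (A3') and to the uniqueness statement (CO) by spanning a unique apartment from any pair of opposite germs. The step $(\ref{vi})\Rightarrow(\ref{ii})$ is where (A5) must be manufactured \emph{without} the metric: for an apartment $A$ and a point $x\in A$, I would build the retraction $r_{A,x}$ by retracting along germ directions toward a chamber at infinity and then show that it does not increase distances by comparing images of segments; here (FC'') is essential, since it reduces the distance estimate for $\seg_A(x,y)$ to finitely many comparisons inside single apartments, which are then glued using (EC) and (A3''). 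I expect the main obstacle to lie exactly here: isolating which incidence data take over the role the triangle inequality usually plays in showing $r_{A,x}$ is distance-non-increasing, and verifying that the finiteness guaranteed by (FC'') is robust enough to carry the argument over an arbitrary ordered abelian group $\Lambda$, not only over $\R$.
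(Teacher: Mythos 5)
Your architecture is essentially the paper's: the same nontrivial implications, rearranged into the single cycle $(1)\Rightarrow(3)\Rightarrow(5)\Rightarrow(4)\Rightarrow(6)\Rightarrow(2)\Rightarrow(1)$ instead of the paper's diagram, and resting on the same two tools, namely the combinatorial building at infinity of Theorem~\ref{Thm_buildingAtInfinity} and the local buildings $\Delta_xX$ of germs. One genuine simplification on your side: your observation that $(5)\Rightarrow(4)$ is formal --- (GG) is literally the special case of (A3') for two germs with a common base point --- is correct, whereas the paper routes this implication through the retraction machinery of Section~\ref{Sec_lp}. Note, however, that this machinery is not thereby dispensable: its real output, Proposition~\ref{Prop_parreau1.15} (an apartment containing a \emph{full} Weyl chamber together with a germ, not merely two germs), is exactly what the paper's proof of (A4) in Section~\ref{Sec_A4} consumes, so your step $(4)\Rightarrow(6)$ still has to produce it.

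There are two concrete gaps. First, your construction of (A5) in $(6)\Rightarrow(2)$ is wrong as stated: a retraction obtained ``by retracting along germ directions toward a chamber at infinity'' cannot satisfy $r^{-1}_{A,x}(x)=\{x\}$, because a retraction centered at a chamber at infinity has large fibers (already for a tree, the retraction from an end identifies infinitely many points with a given point of the line). The retraction must be centered at the \emph{local} germ $\mu=\Delta_xS$ of a Weyl chamber at $x$, as in Definition~\ref{Def_vertexRetraction}; well-definedness then follows from (A2) and (A3''), and the distance estimate from (FC'') as you describe --- except that the gluing of the finitely many apartment pieces uses only the triangle inequality inside the single model apartment $A$, so (EC) plays no role in Section~\ref{Sec_A5}. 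Second, your closing step $(2)\Rightarrow(1)$, i.e.\ that (EC) together with (A1)--(A5) implies (A6), is a substantive theorem which the paper does not prove but cites from Bennett \cite{Bennett2}; your one-sentence plan (``using (A5) to control the intersection and force a common point'') is not an argument, and (A5) is not the operative tool. What makes the idea work is (A2)-convexity: if $A\cap B\cap C=\emptyset$, then $A\cap C$ and $B\cap C$ are disjoint half-apartments of $C$, both contained in the convex intersection $D\cap C$ with $D=(A\oplus B)\cup H$, which forces $D\cap C=C$, hence $C=D$ and then $H\subset A\cap B\cap C$, a contradiction. Finally, the two local-structure implications are sketched past their actual difficulty: under (3), property (CO) is obtained by lifting minimal galleries through the sundial configuration (Propositions~\ref{Prop_sundial} and~\ref{Prop_liftGallery}), and under (4), axiom (A4) requires the gallery-distance maximization argument of Section~\ref{Sec_A4}; ``enough transitivity'' names neither mechanism, and these are precisely the places where (A6), respectively (CO), must do real work.
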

Obviously if one of the properties (A3') and (A3'') hold axiom (A3) is superfluous. 

We will prove the following implications:
$$
\xymatrix{
	& (2) \ar@{<=>}[dl]_{\text{\cite{Bennett2}}} 	& (6) \ar@{=>}[l]\ar@{<=}[d] & \\
(1) \ar@{=>}[r]	& (3)	& (4) \ar@{<=}[l]\ar@{<=>}[r] &  (5) 
}
$$
\vspace{2ex}

The fact that (A6) and (EC) are equivalent assuming (A1) to (A5) is due to Bennett \cite{Bennett2}.
We obtain (GG) and (CO) as discussed in Section~\ref{Sec_localStructure} (compare Corollaries \ref{Cor_GG} and \ref{Cor_CO}). Hence item (\ref{iii}) implies (\ref{iv}).

Section \ref{Sec_A3'} contains the proof of the fact that (\ref{iv}) implies property (A3') and hence (A3''). Later, in Section~\ref{Sec_A4} axiom (A4) is shown assuming (\ref{iv}). The exchange condition (EC) holds as outlined  Section~\ref{Sec_A6'}. Finally, as shown in Section~\ref{Sec_FC1}, condition (FC'') follows from (A1) to (A3) and (CO). This completes the proof of the fact that (\ref{iv}) implies (\ref{vi}).

Axiom (A5) is verified in Section~\ref{Sec_A5} using (A1), (A2), (A3'') and (FC''). Therefore item (\ref{vi}) implies (\ref{ii}).
Compare Section~\ref{Sec_A3'} and \ref{Sec_lp} to obtain that the axioms listed in (\ref{iv}) are equivalent to the ones in (\ref{v}).  See \ref{Sec_A3'} for the fact that  (\ref{iv}) implies (\ref{v}).
The converse, that (\ref{v}) implies (\ref{iv}), is proved in Section~\ref{Sec_lp}.

\subsection{Further results}

Let me start this section with a simple yet interesting consequence of Theorem~\ref{MainThm}.
The class of generalized affine building is a generalization of $\R$-buildings, which themselves generalize the (geometric realizations of) simplicial affine buildings. The $\R$-buildings are the sub-class where $\Lambda=\R$ and where the translational part $T$ of the affine Weyl group equals the co-root-lattice spanned by a crystallographic root system, or is the full translation group of an apartment in the non-crystallographic case. 

For this we are using the metric approach to affine buildings, replacing $\R$-metric spaces by $\Lambda$-metric spaces in the following sense. 

\begin{definition}\label{Def_metric}
A  \emph{$\Lambda$-metric} on a space $X$, is a map $d:X\times X \mapsto \Lambda$ such that for all $x,y,z$ in $X$ the following axioms are satisfied
\begin{enumerate}
\item $d(x,y)=0$ if and only if $x=y$
\item $d(x,y)=d(y,x)$ and
\item the triangle inequality $d(x,z)+d(z,y)\geq d(x,y)$ holds.
\end{enumerate}
\end{definition}

There is however a small problem in viewing Euclidean buildings as a subclass of affine buildings. The definition of an affine building is based on the definition of a given model space, which in turn comes with a fixed metric. In case of $\R$-buildings one usually uses the Euclidean metric on the model space. Therefore the metric on the affine building $X$ is, when restricted to an apartment, precisely the Euclidean metric. Compare for example \cite{Parreau} or Kleiner and Leeb \cite{KleinerLeeb}. 

The natural metric on the model space of a generalized affine building is however defined in terms of the defining root system $\RS$, compare \cite{Diss}.
It is a generalization of the length of translations in apartments of simplicial affine buildings. This length function on the set of translational elements of the affine Weyl group is defined with respect to the length of certain minimal galleries. 
The problem is that this natural metric used for ``$\Lambda$-buildings'' is different from the Euclidean one in case $\Lambda=\R$.
For our purposes it is not necessary to specify any details. We simply assume throughout the following that there exists some  $\aW$-invariant $\Lambda$-metric on $\MS$. 

The question arising is the following: Let us assume that $X$ is an affine building with metric $d$, which is induced by a metric $d_\MS$ on the model space. Let $d'_\MS$ be a metric on the model space, which differs from $d_\MS$. Hence $d'_\MS$ induces a second distance function $d'$ on $X$. 
Does $d'$ satisfy the triangle inequality? And is $(X,d')$ an affine building? To be able to answer these questions one has to understand whether the retractions appearing in (A5) do exist and are distance diminishing. The answer to these questions is ``yes'', and using Theorem \ref{MainThm} we do not need  to prove (A5) directly. 

\begin{corollary}
Let $(X,\App)$ be an affine building. Then every metric on the model space extends to a metric on $X$.
\end{corollary}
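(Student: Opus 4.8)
The plan is to exploit the fact that among the equivalent axiom sets of Theorem~\ref{MainThm}, item (\ref{iii}) — namely (A4) together with (A6) — is purely combinatorial: neither axiom refers to distances, but only to apartments, Weyl chambers and half-apartments, all of which are built from the root system $\RS$ and the atlas $\App$ and are therefore insensitive to the choice of metric on $\MS$. The triangle inequality, by contrast, is the one metric feature that is genuinely in question, and the theorem will hand it to us for free.

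First I would fix the affine building $(X,\App)$ together with the metric $d_\MS$ on the model space inducing the distance $d$, so that $(X,\App)$ satisfies (A1)--(A6) with respect to $d$. Let $d'_\MS$ be any (as throughout, $\aW$-invariant) $\Lambda$-metric on $\MS$, and let $d'$ be the induced distance function on $X$. By the discussion in Definition~\ref{Def_LambdaBuilding}, axioms (A1)--(A3) — which involve only the combinatorial data — together with the $\aW$-invariance of $d'_\MS$ already guarantee that $d'$ is well defined: the distance of two points is computed in any chart of a common apartment, and this is independent of the chart because two such charts differ by an element of $\WT$ by (A2), under which $d'_\MS$ is invariant. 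Symmetry and the separation property $d'(x,y)=0 \iff x=y$ are inherited directly from $d'_\MS$, so only the triangle inequality remains to be established.

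The heart of the argument is the observation that whether $(X,\App)$ satisfies (A4) and (A6) does not depend on which metric sits on $\MS$. Since $(X,\App)$ is a building for $d$, it satisfies item (\ref{iii}); and because (A4) and (A6) are metric-free, item (\ref{iii}) continues to hold verbatim after replacing $d_\MS$ by $d'_\MS$. Applying the implication (\ref{iii})$\Rightarrow$(\ref{i}) of Theorem~\ref{MainThm} then shows that $(X,\App)$ satisfies all of (A1)--(A6) with respect to $d'$; in particular axiom (A5) holds for $d'$. By the closing statement of Definition~\ref{Def_LambdaBuilding}, (A5) forces $d'$ to satisfy the triangle inequality. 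Hence $d'$ is a $\Lambda$-metric on $X$, which is the assertion of the corollary.

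I expect the only delicate point to be the verification that (A4) and (A6) — and the standing axioms (A1)--(A3), in particular the convexity clause of (A2) — are genuinely independent of the metric; this rests on reading ``convex'' and ``half-apartment'' through the combinatorial (half-apartment and order) structure of $\MS$ rather than through any metric notion of segment. Once this metric-independence is granted, no further computation is needed: Theorem~\ref{MainThm} supplies (A5), and with it the triangle inequality, automatically.
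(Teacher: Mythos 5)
Your proposal is correct and follows essentially the same route as the paper: both arguments rest on the observation that axioms (A1)--(A4) and (A6) are metric-free, so item (\ref{iii}) of Theorem~\ref{MainThm} holds for any ($\aW$-invariant) metric on $\MS$, and the equivalence (\ref{iii})$\Leftrightarrow$(\ref{i}) then delivers (A5) and hence the triangle inequality for the induced distance. Your write-up merely makes explicit some details the paper leaves implicit (well-definedness of $d'$ via (A2), inheritance of symmetry and separation), which is a fair elaboration rather than a different proof.
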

\begin{proof}
Since $(X,\App)$ is a building axioms (A6) and (A1) to (A4) are satisfied. These axioms do not contain conditions on the metric and are, by \ref{MainThm} equivalent to the ones listed in Definition~\ref{Def_LambdaBuilding}. Hence every distance function on $X$ which is induced by a metric on the model space satisfies the triangle inequality.
\end{proof}

Thus whether or not a pair $(X,\App)$ modeled on $\MS$ is an affine building does not depend on the metric imposed on $\MS$.
This consequence of our main result makes use of the fact that (A5) can be omitted in Definition \ref{Def_LambdaBuilding}.

The basic idea is to find a purely combinatorial definition of parallelism of Weyl simplices which allows us to prove existence of a spherical building at infinity without using the metric structure of the affine building. Finally this enables us to eliminate axiom (A5) in the definition of an affine building.

Bennett \cite{Bennett} did prove already that two Weyl chambers, which are contained in the same apartment, are at bounded distance if and only if they are translates of one another.
Using this one observes that ``being at bounded distance in the building'' is the same as being, in a certain sense, ``translates of one another''.

This new approach makes the definition of parallelism a bit lengthy but avoids using the metric. 
The details are carried out in Section~\ref{Sec_infinity}, where we prove the following theorem.

\begin{ThmInf*}
Let $(X,\App)$ be a pair satisfying axioms (A1)-(A4). Then
$$
\binfinity X \define\{\partial F : F \text{ is a Weyl simplex in } X\}
$$ 
is a spherical building of type $\RS$ with apartments in one to one correspondence with the apartments of $X$.
\end{ThmInf*}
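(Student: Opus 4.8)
The plan is to verify Tits' classical apartment axioms (B0)--(B2) for a simplicial complex whose simplices are the parallelism classes $\partial F$ of Weyl simplices $F$ and whose apartments are the boundaries $\partial A$ of the apartments $A$ of $X$. The first task is to pin down the \emph{combinatorial} notion of parallelism: I would declare two Weyl simplices parallel if they admit subsimplices lying in a common apartment and being translates of one another there, and then check that this is an equivalence relation. Reflexivity and symmetry are immediate, and the face relation between classes is inherited from the face relation between Weyl simplices, so $\binfinity X$ becomes a simplicial complex once parallelism is known to be an equivalence. The only delicate point at this stage, and the one I expect to be the \emph{main obstacle}, is transitivity --- this is exactly where the classical arguments invoke the metric (bounded Hausdorff distance together with distance-diminishing retractions). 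I would establish it combinatorially by repeatedly applying (A4) to create common apartments and by using Bennett's characterization that within a single apartment two Weyl chambers are at bounded distance, hence parallel, if and only if they are translates; recasting this as the statement that ``having the same germ at infinity'' is transported consistently from apartment to apartment is the technical heart of the proof.

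With parallelism in hand I would verify (B0), that each apartment $\partial A$ is a Coxeter complex of type $\RS$. Since every apartment is the image $f(\MS)$ of the model space under a chart, it suffices to analyze $\partial\MS$. In $\MS$ two Weyl simplices are translates precisely when they point in the same direction, so the parallelism classes of Weyl simplices in $\MS$ are in natural bijection with the faces of the spherical Coxeter complex $\Sigma(\RS)$ associated to $(\sW,\RS)$; the action of $\sW$ supplies the type function and the Coxeter structure. Transporting this across the chart $f$ shows $\partial A\cong\Sigma(\RS)$ and, at the same time, sets up the correspondence $A\mapsto\partial A$ between apartments of $X$ and apartments of $\binfinity X$.

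For (B1), that any two simplices of $\binfinity X$ lie in a common apartment, I would first treat two chambers $\partial S,\partial T$ represented by Weyl chambers $S,T$ in $X$. Axiom (A4) yields subchambers $S'\subseteq S$ and $T'\subseteq T$ inside a common apartment $A$; since a sub-Weyl-chamber is a translate of the chamber containing it and hence parallel to it, one has $\partial S=\partial S'$ and $\partial T=\partial T'$, so both classes lie in $\partial A$. A general pair of simplices at infinity is then reduced to this case by choosing Weyl chambers having the given simplices as faces and passing to their classes, using the compatibility of the face relation with parallelism.

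Finally, for (B2) I must produce, for any two boundary apartments $\partial A$ and $\partial B$ sharing a chamber (and more generally a common simplex), a type-preserving isomorphism $\partial A\to\partial B$ fixing $\partial A\cap\partial B$ pointwise. Here (A2) is the tool: on the overlap $A\cap B$ the defining charts differ by an element $w\in\WT$, and writing $w$ as a translation followed by its spherical part $\sigma\in\sW$, only $\sigma$ acts on directions, inducing the desired simplicial isomorphism $\partial A\to\partial B$. I would check that it restricts to the identity on the common simplices at infinity, using that a simplex in $\partial A\cap\partial B$ is represented by Weyl simplices whose germs already agree inside $A\cap B$. Assembling (B0)--(B2) gives that $\binfinity X$ is a spherical building of type $\RS$; the bijectivity of $A\mapsto\partial A$ then follows by showing that an apartment of $X$ is determined by the parallelism classes of the Weyl simplices it contains, which I would again deduce from (A2) and (A4).
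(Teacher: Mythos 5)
Your proposal is correct and follows essentially the same route as the paper: a combinatorial (metric-free) definition of parallelism via translates in common apartments, with transitivity as the technical core (the paper's Proposition~\ref{Prop_parallelEquiv}, proved via (A4) and the sub-Weyl-chamber lemma), then $\partial A\cong\Sigma(\RS)$ for each apartment, (A4) giving common apartments for chambers at infinity, and (A2) supplying the chart-comparison isomorphism that fixes $\partial A\cap\partial A'$ (the paper checks Brown's condition (B2'') rather than all of (B0)--(B2), but this is the same verification). You correctly identified both the main obstacle and the tools the paper uses to overcome it.
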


\subsection*{The remainder of the present paper is paper is organized as follows.} 
In the next section we will give a combinatorial definition of parallelism of Weyl simplices. Using this we prove the existence of the spherical building at infinity using axioms (A1) to (A4), only. 

The rest of the paper, Sections \ref{Sec_infinity} to \ref{Sec_A6'}, need not be read sequentially. There we prove one after another the implications of  \ref{MainThm} as shown in  the diagram on page \pageref{MainThm}. The only sections which are better read in a row are Sections~\ref{Sec_retractions} to \ref{Sec_A5}.
Otherwise the best possible strategy might be to pick ones favorite inclusion and read the sections needed for its proof. We did already say, after stating the main theorem in the previous subsection,  where to find what.

\section{The building at infinity}\label{Sec_infinity}

Any simplicial affine building has an associated spherical building at infinity. 
Most of the constructions of the building at infinity found in the literature, such as the one in \cite{Parreau} or \cite{Brown, AB} for example, heavily rely on the metric structure of the affine building.
Bennett's \cite{Bennett} proof for generalized affine buildings did rely on metric properties as well. 

The purpose of the present section is to provide a definition of parallelism for Weyl simplices that does not involve the metric structure of the affine building and which allows a new, combinatorial proof for the existence of a spherical building at infinity.
To be precise, in comparison to \cite{Bennett}, we avoid using axiom (A5) in the proof.

\begin{definition}\label{Def_buildingAtInfinity}\label{Def_parallelSimplices}
\index{building at infinity}
Let $(X,\App)$ be a pair satisfying axioms (A1)-(A4). We say that $S$ and $T$ are \emph{parallel} if $S\cap T$ contains a Weyl chamber. We denote by $\partial S$ the parallel class of $S$. 

As we will see later on in this section, the set 
$$
\{\partial S : S \text{ Weyl chamber of } X \text{ contained in an apartment of } \App\}
$$
of equivalence classes of Weyl chambers is the collection of chambers of a spherical building \emph{at infinity of $X$}.
\end{definition}

Bennett defined two Weyl simplices to be parallel if they are at bounded Hausdorff distance. One can proof, compare \cite[4.23]{Diss} and \cite{Bennett}, that ``being at bounded distance'' can be characterized differently. 

\begin{prop}\label{Prop_parallel}
Given two Weyl chambers $S$ and $T$ the following are equivalent 
\begin{enumerate}
 \item They are parallel in the sense of Definition~\ref{Def_parallelSimplices}. 
 \item They contain sub-Weyl chambers $S'\subset S$ and $T'\subset T$ such that $S'$ and $T'$ are contained in a common apartment and are translates of one another in this apartment.
 \item $S$ and $T$ are at bounded Hausdorff distance, i.e. are parallel in the sense of \cite{Parreau}.
\end{enumerate}
\end{prop}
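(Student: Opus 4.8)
The plan is to establish the cycle of implications $(2)\Rightarrow(1)\Rightarrow(3)\Rightarrow(2)$, isolating the genuinely combinatorial content in the first two steps and invoking Bennett's metric characterization only where the Hausdorff distance is unavoidable.

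First I would prove $(2)\Rightarrow(1)$ by a direct computation inside the apartment $A$ furnished by $(2)$. Identifying $A$ with $\MS$ via a chart, the sub-Weyl chambers $S'$ and $T'$ become two translates of a common cone $C=w(\Cf)$, say $T'=S'+v$ for some $v\in\MS$. Writing $C$ as the intersection of the half-spaces $\{\alpha\geq 0\}$ bounded by its walls (with $\alpha$ ranging over a basis, hence linearly independent), one checks that $S'\cap T'$ is cut out by shifting each defining inequality by $\max(0,\langle v,\alpha\rangle)$; solving the resulting linear system for a new apex shows that $S'\cap T'$ is again a translate of $C$, in particular a full-dimensional Weyl chamber. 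Since $S'\cap T'\subseteq S\cap T$, the chambers $S$ and $T$ are parallel in the sense of Definition~\ref{Def_parallelSimplices}. The reverse inclusion $(1)\Rightarrow(2)$ is immediate: if $S\cap T$ contains a Weyl chamber $U$, then $U$ is a sub-Weyl chamber of both $S$ and $T$ and, being a Weyl chamber of $X$, lies in some apartment of $\App$; taking $S'=T'=U$ exhibits the required pair of translates, by the zero translation.

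Next I would treat $(1)\Rightarrow(3)$ without appealing to any triangle inequality, which is essential since at this stage only $(A1)$--$(A4)$ are assumed and the global distance $d$ need not be a metric. Let $U\subseteq S\cap T$ be a Weyl chamber. For any $x\in S$, both $x$ and a suitable point $u\in U$ lie in the single apartment containing $S$, where $d$ restricts to the model metric; since $U$ is cofinal in $S$ one bounds $d(x,u)$ uniformly, and because $u\in U\subseteq T$ this already yields a uniform bound on $d(x,T)$. The symmetric estimate bounds $d(y,S)$ for $y\in T$, so $S$ and $T$ are at bounded Hausdorff distance.

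The remaining implication $(3)\Rightarrow(2)$ is where Bennett's work enters and is the main obstacle. Using $(A4)$ I would first pass to sub-Weyl chambers $S''\subseteq S$ and $T''\subseteq T$ lying in a common apartment $A$. The delicate point is to descend the hypothesis of $(3)$ to a bound on the Hausdorff distance of $S''$ and $T''$ computed inside $A$: a point of $S''$ is a priori only known to be close to $T$, and one must upgrade this to closeness to the cofinal sub-chamber $T''$, shaving off inside $A$ the portions of the chambers responsible for large distance. This is precisely the content of \cite[4.23]{Diss} together with Bennett's theorem \cite{Bennett} that, within a single apartment, two Weyl chambers are at bounded distance if and only if they are translates of one another; inside $A$ the triangle inequality is available because $d$ there agrees with the model metric. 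Applying this characterization to $S''$ and $T''$ shows they are translates in $A$, which is exactly $(2)$. I expect that the cone geometry of the first step reappears in carrying out the descent to the sub-chambers.
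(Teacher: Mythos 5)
Your proposal is correct, but it takes a genuinely different route from the paper, which for this proposition gives no argument at all: Proposition~\ref{Prop_parallel} is stated as known, with the proof delegated to \cite[4.23]{Diss} and \cite{Bennett}, and even the neighbouring Lemma~\ref{Lem_subWeyl} is deduced from Bennett's metric propositions rather than proved directly. Your $(2)\Rightarrow(1)$ is a direct linear computation: the intersection of the translates $p+C$ and $p+v+C$ of a simplicial cone is $r+C$, where $r$ is the unique point with $\alpha_i(r)=\alpha_i(p)+\max\bigl(0,\alpha_i(v)\bigr)$ for the basis of wall functionals $\alpha_i$; this is exactly the content of parts (2) and (3) of Lemma~\ref{Lem_subWeyl}, which the paper instead obtains from \cite[Prop.~2.7 and Prop.~3.4]{Bennett} via bounded Hausdorff distance. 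Your computation is more elementary and entirely metric-free, which is, if anything, better aligned with the paper's declared aim of decoupling the building structure from the metric. Your $(1)\Rightarrow(3)$, bounding $d(x,T)$ by translation invariance of $d$ inside a single apartment containing $S$ (where a Weyl chamber contained in $S$ is a translate of $S$), is also correct and rightly avoids the global triangle inequality, which is indeed unavailable under (A1)--(A4) alone. For $(3)\Rightarrow(2)$ you fall back on precisely the two sources the paper itself cites, and your identification of the delicate step --- transferring the Hausdorff bound to the sub-Weyl chambers in the common apartment provided by (A4), where Bennett's translate-iff-bounded-distance theorem applies --- is an accurate description of what those sources supply; so this deferral puts you at exactly the paper's own level of detail rather than below it. One caveat on $(2)\Rightarrow(1)$: if Weyl chambers of $\MS$ were required to have apex in the orbit of a proper translation group $T\subsetneq\MS$, your set $r+C$ would be a translated cone but not literally a Weyl chamber, and you would need the further (easy) step of locating an admissible apex inside it; under the conventions the paper actually uses (translations taken in $\aW$, as in the proof of Proposition~\ref{Prop_parallelEquiv}), this issue does not arise.
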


\begin{lemma}\label{Lem_subWeyl}
\begin{enumerate}
\item If $C$ is a sub-Weyl chamber of $D$, then $C$ is a translate of $D$. 
\item If $S$ is a translate of the Weyl chamber $T$ in an apartment $A$, then $S\cap T$ contains a common Weyl chamber of both.
\item Given sub-Weyl chambers $S$ and $T$ of the same Weyl chamber $U$, then $S\cap T$ contains a Weyl chamber.
\end{enumerate}
\end{lemma}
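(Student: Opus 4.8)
The plan is to reduce all three statements to elementary computations with cones in the model space $\MS$. Since a sub-Weyl chamber is by definition a genuine Weyl chamber contained in the given one, in each item all chambers involved lie in a common apartment $A$ (for (1), $C\subseteq D\subseteq A$; for (2) this is the hypothesis; for (3), $S,T\subseteq U\subseteq A$). Fixing a chart $f\colon\MS\hookrightarrow X$ with $f(\MS)=A$ and invoking (A2), I may pull everything back along $f^{-1}$ and argue inside $\MS$. There every Weyl chamber has the shape $p+w(\Cf)$ with $w\in\sW$ and basepoint $p$, and I write $K$ for its direction cone $w(\Cf)$; recall that $K$ is a simplicial cone cut out by $n$ linearly independent ``simple'' linear forms $\ell_1,\dots,\ell_n$ associated with $w$, so that $K=\{x\in\MS : \ell_i(x)\geq 0 \text{ for all } i\}$ and $K+K\subseteq K$.

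For (1) write $D=a+w(\Cf)$ and $C=u+w'(\Cf)$. The recession (direction) cone of a subset is monotone under inclusion, so $C\subseteq D$ forces $w'(\Cf)\subseteq w(\Cf)$. As both are closed chambers of the spherical Coxeter fan, i.e. full fundamental domains for $\sW$, an inclusion between them must be an equality; hence $w'=w$ as directions and $C=u+w(\Cf)=D+(u-a)$. Thus $C$ is the translate of $D$ by $u-a$ (which lies in $K$ because $u\in D$), proving (1).

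For (2) the hypothesis that $S$ is a translate of $T$ gives a common direction cone $K$, so $S=a+K$ and $T=b+K$. Because the forms $\ell_i$ are a coordinate system on $\MS$, there is a unique point $c$ with $\ell_i(c)=\max\{\ell_i(a),\ell_i(b)\}$ for every $i$; this $c$ is the least upper bound of $a$ and $b$ in the ordering defined by $K$. Comparing the inequalities $\ell_i(x)\geq\ell_i(a)$ and $\ell_i(x)\geq\ell_i(b)$ then shows
$$
S\cap T=(a+K)\cap(b+K)=c+K,
$$
again a cone of direction $K$ contained in both $S$ and $T$. It remains to exhibit a genuine Weyl chamber inside $c+K$: if $c$ is itself an admissible basepoint we are done, and otherwise I translate the apex along $K$ to an admissible basepoint $c'\in c+K$, whose chamber $c'+K$ is then a common sub-Weyl chamber of $S$ and $T$. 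I expect this last point—verifying that $c+K$ actually carries a Weyl chamber, rather than merely being a cone of the correct shape—to be the one genuine obstacle, since it is where the structure of the basepoint set (the translation group of $\WT$) must be used.

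Finally (3) follows by combining the first two. If $S$ and $T$ are sub-Weyl chambers of $U$, then by (1) each is a translate of $U$, say $S=U+v_S$ and $T=U+v_T$; subtracting gives $S=T+(v_S-v_T)$, so $S$ is a translate of $T$. Since $S,T\subseteq U$ lie in the apartment containing $U$, part (2) applies and produces a Weyl chamber inside $S\cap T$, completing the proof.
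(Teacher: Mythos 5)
Your reduction to cone computations in $\MS$ is a genuinely different route from the paper's, which instead quotes the parallelism machinery of \cite{Bennett} and \cite{Diss} (sub-Weyl chambers are at bounded Hausdorff distance, bounded distance is equivalent to being translates of one another, and two parallel Weyl chambers share a sub-Weyl chamber). Your parts (2) and (3) are sound: the simple roots attached to the direction $w(\Cf)$ do give an additive isomorphism $\MS\to\Lambda^n$ (tensor the $F$-linear isomorphism with $\Lambda$), so the identity $(a+K)\cap(b+K)=c+K$ with $\ell_i(c)=\max\{\ell_i(a),\ell_i(b)\}$ holds verbatim over $\Lambda$, and your derivation of (3) from (1) and (2) is exactly the paper's. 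Your worry about admissible basepoints is vacuous in this paper's setting: Weyl chambers and germs are formed at arbitrary points of $X$ throughout (see e.g.\ Lemma~\ref{Lem_cover}, where $z$-based Weyl chambers are taken for an arbitrary point $z$), so $c+K$ is itself a Weyl chamber and no further translation of the apex is needed.

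The genuine gap is in part (1). The monotonicity of recession cones under inclusion is a theorem of \emph{real} convex geometry, and the mechanism behind it fails over a non-Archimedean $\Lambda$: monotonicity rests on the fact that if $x+nv\in D$ for all $n\in\N$, with $D$ closed and convex, then $v$ is a recession direction of $D$, and this is false in general. Take $\Lambda=\R\times\R$ ordered lexicographically, $\RS$ of rank one (so $\MS=\Lambda$), $D=\Lambda^{\geq0}$, $x=(1,0)$ and $v=(0,-1)$: then $x+nv=(1,-n)\in D$ for every $n$, yet $v\notin\mathrm{rec}(D)=\Lambda^{\geq0}$. So scaling a single direction vector of $C$ by integers cannot by itself force $w'(\Cf)\subseteq w(\Cf)$. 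The claim you need is nevertheless true, but it uses the special structure of Weyl cones rather than general convexity: if $w'(\Cf)\neq w(\Cf)$, choose a root $\alpha$ with $\alpha\geq0$ on $w(\Cf)$ and $\alpha\leq0$, not identically zero, on $w'(\Cf)$; picking $h$ in the corresponding open chamber of $\mathrm{span}_F(\RS)$ with $\alpha(h)=-1$, the elements $h\otimes\lambda$ with $\lambda\in\Lambda^{\geq0}$ all lie in $w'(\Cf)$ and satisfy $\alpha(h\otimes\lambda)=-\lambda$, so $\alpha$ is unbounded below on $w'(\Cf)$; the inclusion $u+w'(\Cf)\subseteq a+w(\Cf)$ would then force $\alpha(u)-\lambda\geq\alpha(a)$ for all $\lambda\geq0$, which is absurd since $\Lambda^{\geq 0}$ is unbounded. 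With this substitute for the recession-cone appeal, your part (1) --- and hence your whole proof --- goes through; as written, it is only valid for $\Lambda=\R$.
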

\begin{proof}
Since $C$ is a sub-Weyl chamber of $D$ these two are at bounded distance. By Proposition 4.23.2 in \cite{Diss} there exists then a Weyl chamber $U\subset C\cap D = C$ having bounded distance to both. By 1. of the same Proposition this is equivalent to the fact that one is a translate of the other. Hence (1).

To prove the second assertion observe that $S=t+T$ for some translation $T$ in the affine Weyl group. Therefore $S$ ant $T$ are parallel in the sense of Bennett by \cite[Prop. 2.7]{Bennett}. Proposition  \cite[Proposition 3.4]{Bennett} implies that $S\cap T$  contains a sub-Weyl chamber parallel to both. This implies (2).

Using the first item we can conclude that the sub-Weyl chambers $S$ and $T$ of $U$ in the last item are both translates of $U$. Hence $S$ is a translate of $T$ and they are, by 2.23.1 in \cite{Diss}, at bounded distance of one another.  By the second assertion of the same proposition, their intersection therefore contains a sub-Weyl chamber of both. 
\end{proof}

Let $F$ and $G$ be Weyl simplices in an affine building $X$. Let $S$ and $T$ be Weyl chambers such that $F$ is a face of $S$ and $G$ one of $T$. By (A4) there exists an apartment $A$ containing sub-Weyl chambers $S'\subset S$ and $T'\subset T$. In an apartment containing $S$ the sub-Weyl chamber $S'$ is a translate of $S$ and thus there exists a face $F'$ of $S'$ which is a translate of $F$ in this apartment. We say that $F'$ \emph{corresponds to} $F$. In the same manner there is a face $G'$ of $T'$ corresponding to $G$.

\begin{definition}\label{Def_parallelSimplex}
Two Weyl simplices $F$ and $G$ are \emph{parallel} if the corresponding Weyl simplices $F'$ and $G'$ we described above are translates of one another in an apartment containing both. 
\end{definition}

This definition is clearly independent of the choice of $A$ since every sub-Weyl chamber of a Weyl chamber $S$ is a translate of $S$ in every apartment containing $S$ (see Lemma~\ref{Lem_subWeyl}). By Proposition~\ref{Prop_parallel} it is equivalent to the definition used in \cite{Bennett} or \cite{Parreau}. 

\begin{prop}\label{Prop_parallelEquiv}
Parallelism is an equivalence relation on Weyl simplices. 
\end{prop}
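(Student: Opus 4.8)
The plan is to verify the three defining properties of an equivalence relation---reflexivity, symmetry, and transitivity---for the parallelism of Weyl simplices given in Definition~\ref{Def_parallelSimplex}, using Lemma~\ref{Lem_subWeyl} and Proposition~\ref{Prop_parallel} as the main tools. Reflexivity is immediate: any Weyl simplex $F$ is a translate of itself (by the identity translation) in any apartment containing it, so $F$ is parallel to $F$. Symmetry is equally transparent, since ``$F'$ is a translate of $G'$'' is a symmetric relation---if $F' = t + G'$ then $G' = (-t) + F'$---and both lie in the same apartment by hypothesis. So the only real content is transitivity.

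For transitivity, suppose $F \parallel G$ and $G \parallel H$, where $F, G, H$ are faces of Weyl chambers $S, T, U$ respectively. The first step is to reduce everything to Weyl chambers rather than general simplices, because the apartment-theoretic machinery (translates, common apartments, intersections) is cleanest there. I would invoke Proposition~\ref{Prop_parallel}, which says parallelism of \emph{chambers} can be detected by the existence of common sub-Weyl-chambers that are translates in a shared apartment; the key is to pass from the faces $F, G, H$ to the chambers $S, T, U$ and argue that parallelism of the faces is governed by the corresponding parallelism relation on the chambers together with a matching of face-types. Concretely, since parallelism of simplices is defined via the ``corresponding'' faces $F', G'$ inside a common apartment, I would first establish that $F \parallel G$ forces $S$ and $T$ to be parallel \emph{as chambers} (via Definition~\ref{Def_parallelSimplices}, i.e.\ $S \cap T$ contains a Weyl chamber, possibly after passing to the relevant sub-chambers) and that $F, G$ occupy the ``same position'' as faces.

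The main obstacle will be the bookkeeping of common apartments: to chain $F \parallel G$ and $G \parallel H$ together, I need a \emph{single} apartment that simultaneously sees representatives of all three simplices. The definition only guarantees a common apartment for each pair. The plan here is to use Lemma~\ref{Lem_subWeyl}(3): given sub-Weyl chambers of a common Weyl chamber, their intersection again contains a Weyl chamber. By repeatedly shrinking to sub-Weyl chambers---first within $T$ (or $G$) to a piece witnessing both parallelisms---and then applying (A4) to find an apartment containing sub-chambers of the relevant pair, I would build a common apartment in which suitable sub-simplices $F''$, $G''$, $H''$ all appear, each a translate of the original $F$, $G$, $H$. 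Once all three live in one apartment, translations compose: $F''$ a translate of $G''$ and $G''$ a translate of $H''$ gives $F''$ a translate of $H''$, and since translates-of-sub-chambers-are-translates-of-the-original (Lemma~\ref{Lem_subWeyl}(1)), this descends to the statement that $F$ and $H$ are parallel.

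Finally, I would note that the well-definedness already observed after Definition~\ref{Def_parallelSimplex}---that parallelism is independent of the choice of apartment $A$ and of the chambers $S, T, U$ of which the simplices are faces---is exactly what makes this chaining legitimate, since the intermediate shrinking to sub-chambers does not change the parallel class. The one subtlety to check carefully is that when I shrink $G$ to a sub-simplex $G''$ compatible with \emph{both} the $F$-side and the $H$-side, the sub-simplex is still a face of a common sub-Weyl-chamber on each side; this is where Lemma~\ref{Lem_subWeyl}(3) does the essential work of guaranteeing a common refinement.
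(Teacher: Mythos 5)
Your treatment of reflexivity and symmetry, and the general shape of your transitivity plan (use a common piece on the $G$-side, shrink via Lemma~\ref{Lem_subWeyl}, then compose translations in one apartment), are close in spirit to the paper's proof. However, the step you build the whole reduction on is false: you say you would ``first establish that $F \parallel G$ forces $S$ and $T$ to be parallel \emph{as chambers}.'' This implication fails, even after passing to sub-Weyl chambers. Take two adjacent Weyl chambers $S$ and $T$ in a single apartment, based at the same point and sharing the panel $F=G$: then $F$ and $G$ are trivially parallel (they coincide), but $S\cap T$ is that panel, which contains no Weyl chamber, and no sub-Weyl chambers of $S$ and $T$ are translates of one another, since $\partial S\neq\partial T$. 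So parallelism of faces is not governed by parallelism of the ambient chambers, and this reduction cannot be carried out; the rest of your plan, which presupposes it, does not get off the ground as stated.

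The repair --- and this is exactly what the paper does --- is to \emph{construct} parallel chambers carrying the relevant faces rather than to use the chambers $S,T,U$ you started with. Fix a single Weyl chamber having $G$ as a face and use it for both pairs $(F,G)$ and $(G,H)$; after shrinking (your Lemma~\ref{Lem_subWeyl}(3) step) the two witness apartments $A$ and $B$ contain a common sub-Weyl chamber $S'$ with face $G'$, and the translations with $F'=t+G'$ in $A$ and $G'=s+H'$ in $B$ produce chambers $C=t+S'\subset A$ and $D=-s+S'\subset B$ having $F'$ and $H'$ as faces. These chambers are translates of $S'$ by construction, so Lemma~\ref{Lem_subWeyl}(2) and (3) yield a common sub-Weyl chamber of $C$, $S'$ and $D$ lying inside $S'\subset A\cap B$, and the translation bookkeeping then goes through essentially as you describe. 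Note also that no further application of (A4) is needed, nor would it suffice, to obtain the single apartment you ask for: applying (A4) pairwise does not produce an apartment meeting all three parallel classes; it is the common chamber $S'$ sitting in $A\cap B$ that does this work.
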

\begin{proof}
Reflexivity and symmetry is clear. Hence it remains to prove transitivity. Let $F$, $G$ and $H$ be Weyl simplices such that $G$ is parallel to $F$ and parallel to $H$. It is to prove that $F$ is parallel to $H$ as well. 
Since $F$ and $G$ are parallel there exist translates $F'$ and $G'$, respectively, which are contained in a common apartment $A$ in which they are translates of one another. Thus there exists a translation $t\in \aW$ such that $F'= t+G'$. For the same reason there exists an apartment $B$ containing translates $G''$ and $H''$ of $G$, respectively $H$. Furthermore there is a translation $s$ such that $G''= s+ H''$.

For the following reason we may assume that $G'=G''$: To find $A$ and $B$ we need to apply Definition~\ref{Def_parallelSimplex} to the pairs $F$,$G$ and $G$, $H$. We may use in both cases the same Weyl chamber $S$ having $G$ as a face. Doing so we obtain sub-Weyl chambers $S'$ in $A$ and $S''$ in $B$ having $G'$, respectively $G''$ as a face. Replacing, if necessary,  $S'$ and $S''$ by a common sub-Weyl chamber $S'''$ we may assume that $S'=S''$ and that $G'=G''$. 

Hence we are in the following situation.
The Weyl simplex $F'$ is a face of the Weyl chamber $T'$ which is contained in the same apartment $A$ as the Weyl chamber $S'$ which has $G'$ as a face. Furthermore $F' = t + G'$ in $A$ and $G'=s+H'$ in $B$. The Weyl simplex $H'$ is a face of $U'$, a Weyl chamber contained in $B$ which is an apartment containing $S'$. In particular $S'$ is contained in the intersection of $A$ and $B$. 

The translate $C\define t+S'$ of $S'$ is also a Weyl chamber in $A$ having $F'$ as a face and $D\define -s+ S'$ is a Weyl chamber in $B$ with face $H'$. The intersection of $D$ and $S'$ contains a Weyl chamber $D'$ and the intersection $S' \cap C$ contains a Weyl chamber $C'$    
Both, $C'$ and $D'$, are sub-Weyl chambers of $S'$. By Lemma~\ref{Lem_subWeyl} their intersection thus contains a Weyl chamber $C''$.

By the arguments above $C'$ is a translate of $D'$ in every  apartment which contains $S'$. 
The face $F'$ is parallel to $G'$ and the Weyl simplex $G'$ is parallel to $H'$. Therefore $F'$ is a translate of $F''\subset C'$ and $H'$ is a translate of  $H''\subset D'$. This implies that $F''$ is a translate of $H''$. 
Hence $F'$ is parallel to $H'$ in the sense of Definition~\ref{Def_parallelSimplex} 
\end{proof}

We say that $\partial F$ \emph{is a face of} $\partial S$ if there exist representatives $F$ and $S$ such that $F$ is a face of $S$. This defines a simplicial structure on parallel classes of Weyl simplices.
We define two parallel classes $\partial F$ and $\partial G$ of Weyl simplices to be \emph{ adjacent} if there exist representatives based at the same vertex and having a face in common. 

\begin{thm}\label{Thm_buildingAtInfinity}
Let $(X,\App)$ be a pair modeled on $\MS(\RS, \Lambda, T)$ satisfying axioms (A1)-(A4). Then the set 
$$
\binfinity X \define\{\partial F : F \text{ is a Weyl simplex in } X\}
$$ 
is a spherical building of type $\RS$ with apartments in one to one correspondence with the apartments of $X$.
\end{thm}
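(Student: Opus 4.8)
The plan is to verify the standard building axioms for the simplicial complex $\binfinity X$ whose simplices are the parallel classes $\partial F$ of Weyl simplices, with chambers being the classes $\partial S$ of Weyl chambers. I would organize the proof around three tasks: (i) showing $\binfinity X$ is a simplicial complex of the expected type with the spherical Coxeter complex structure on each apartment, (ii) establishing the two building axioms (any two simplices lie in a common apartment, and for any two simplices there is an apartment-preserving isomorphism fixing their intersection), and (iii) exhibiting the apartment correspondence. Throughout, parallelism (Definition~\ref{Def_parallelSimplex}) and its being an equivalence relation (Proposition~\ref{Prop_parallelEquiv}) replace the metric notion of ``direction at infinity'', so the whole argument stays combinatorial and uses only (A1)--(A4).

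First I would fix the apartments of $\binfinity X$. Given an apartment $A$ of $X$, its Weyl simplices fall into parallel classes, and I claim the map sending a Weyl simplex $F\subset A$ to $\partial F$ identifies the parallel classes meeting $A$ with the simplices of the spherical Coxeter complex of type $\RS$ associated to $A$: within a single apartment, two Weyl simplices are parallel exactly when they are $T$-translates sharing a sub-Weyl chamber (by Lemma~\ref{Lem_subWeyl}), so parallel classes of Weyl chambers in $A$ biject with the $\sW$-chambers, i.e.\ with the directions, giving a copy of the spherical building's apartment. Define $\partial A$ to be the set of classes $\partial F$ with a representative in $A$; this is the sphere $\partial_\App A$ and carries the Coxeter structure. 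The adjacency and face relations defined before the theorem are then exactly the simplicial relations pulled back from this Coxeter complex, which settles the local/simplicial structure.

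For the covering axiom I must show any two classes $\partial F,\partial G$ lie in a common $\partial A$. Picking chambers $S\supset F$ and $T\supset G$ and applying (A4), I get sub-Weyl chambers $S'\subset S$, $T'\subset T$ in a common apartment $A$; since $\partial S=\partial S'$ and $\partial G$ has a representative among the faces of $T'$ (the corresponding face $G'$ of Definition~\ref{Def_parallelSimplex}), both classes have representatives in $A$, so $\partial F,\partial G\in\partial A$. The harder building axiom is the existence, for two apartments $\partial A,\partial B$ of $\binfinity X$ both containing a chamber $\partial S$ and a simplex $\partial F$, of an isomorphism $\partial A\to\partial B$ fixing $\partial A\cap\partial B$ pointwise. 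Here I would transport the problem back to $X$: using (A4) repeatedly I can find, inside each of $A$ and $B$, sub-Weyl chambers representing the common classes, then invoke (A2) to produce an element $w\in\WT$ matching the charts of $A$ and $B$ on the overlap $f^{-1}(g(\MS))$, which descends to the required type-preserving isomorphism on the spheres at infinity fixing the common simplices. The correspondence $A\mapsto\partial A$ between apartments of $X$ and of $\binfinity X$ follows once I check it is a bijection respecting the simplicial structure.

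I expect the main obstacle to be proving that the apartment-intersection isomorphism axiom holds without recourse to the metric. Classically one knows $\partial A\cap\partial B$ is a subcomplex (indeed a union of closed half-apartments or a wall) and builds the isomorphism from the retraction of axiom (A5); since (A5) is deliberately unavailable here, I must instead argue purely from (A1)--(A2). The delicate point is showing that the $\WT$-element furnished by (A2) on the convex set $f^{-1}(g(\MS))$ acts on parallel classes as a genuine type-preserving simplicial isomorphism that is the identity on all common classes at infinity; this requires knowing that a common class $\partial F\in\partial A\cap\partial B$ is always represented by an honest Weyl simplex lying in $A\cap B$, which in turn rests on the parallelism characterization of Proposition~\ref{Prop_parallel} together with (A4) to push representatives into the intersection. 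Establishing that last representability statement combinatorially is where I anticipate the real work, and it is the step most naturally supported by Lemma~\ref{Lem_subWeyl} and the transitivity argument already used in Proposition~\ref{Prop_parallelEquiv}.
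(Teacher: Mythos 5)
Your proposal follows essentially the same route as the paper's proof: the adjacency relation makes $\binfinity X$ a chamber complex, the parallel classes represented in a fixed apartment of $X$ form the Coxeter-complex apartments of type $\RS$, axiom (A4) yields the common-apartment axiom for chambers, and Brown's reduction (B2'') is verified by normalizing charts via (A1)/(A2) so that $f'\circ f^{-1}$ restricts to the identity on $A\cap A'$ and then passing to the induced map at infinity. The ``delicate point'' you flag at the end --- that a common class in $\partial A\cap\partial A'$ should be represented inside $A\cap A'$ so that the induced map actually fixes it --- is exactly the step the paper disposes of in a single asserted sentence, so your plan is, if anything, more explicit than the paper about where the remaining work lies.
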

\begin{proof}
By definition of adjacency  the set $\binfinity X$ is a chamber complex. The sub-complex consisting of all equivalence classes of Weyl simplices contained in a fixed apartment is isomorphic to a Coxeter complex of type $\RS$ if $X$ is modeled on the root system $\RS$. These sub-complexes are the apartments of $\binfinity X$. Axiom (A4) implies that two chambers $\partial S$ and $\partial T$ are contained in a common apartment. Following \cite[p.76/77]{Brown} it remains to prove that two apartment of $\binfinity X$ which contain a common chamber are  isomorphic via an isomorphism fixing their intersection, that is (B2'').

Let $A, A'$ be apartments and $c$ a chamber in $\partial A\cap \partial A'$. Then there exist representatives $S\subset A$ and $S'$ in $A'$ of the equivalence class $c$. Hence $S\cap S'$  contains a sub-Weyl chamber $S''$. 
Therefore we can find charts $f,f'$ of $A,A'$ such that 
$$
f'\circ f^{-1}\vert_{A\cap A'} = id\vert_{A\cap A'}.
$$
The induced map $\partial(f'\circ f^{-1})$ at infinity is an isomorphism fixing $\partial A \cap \partial A'$.
\end{proof}

\section{Local structure}\label{Sec_localStructure}

Let in the following $(X,\App)$ be a pair modeled on $\MS=\MS(\Lambda, \RS, T)$ and satisfying all axioms but (A5). Recall from the previous section that this is enough to conclude that $\binfinity X$ is a spherical building. 

\begin{definition}\label{Def_germ}
\index{{generalized affine building}!{germ}}
Two Weyl simplices $S$ and $S'$ \emph{share the same germ} if both are based at the same vertex and if $S\cap S'$ is a neighborhood of $x$ in $S$ and in $S'$.
\end{definition}
It is easy to see that this is an equivalence relation on the set of Weyl simplices based at a given vertex. The equivalence class of an $x$-based Weyl simplex $S$ is denoted by $\Delta_x S$ and is called the \emph{germ of $S$ at $x$}.

The germs of Weyl simplices at a special vertex $x$ are partially ordered by inclusion: $\Delta_x S_1$ is contained in $\Delta_xS_2$ if there exist $x$-based representatives $S'_1, S'_2$ contained in a common apartment such that $S_1'$ is a face of $S_2'$. Let $\Delta_xX$ be the set of all germs of Weyl simplices based at $x$.

Recall that a germ $\mu$ of a Weyl chamber $S$ at $x$ is \emph{contained in a set $Y$} if there exists $\varepsilon\in\Lambda^{+}$ such that $S\cap B_\varepsilon(x)$ is contained in $Y$.

\begin{prop}\label{Prop_tec16}
Let $(X, \App)$ be an affine building and $c$ a chamber in $\binfinity X$. Let $S$ be a Weyl chamber in $X$ based at $x$. Then there exists an apartment $A$ such that $\Delta_xS$ is contained in  $A$ and  such that $c$ is a chamber of $\partial A$.
\end{prop}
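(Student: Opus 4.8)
The plan is to build the desired apartment by first capturing the chamber $c$ at infinity in an auxiliary apartment and then transporting that information back to the basepoint $x$ of $S$, keeping the germ $\Delta_x S$ intact throughout.

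First I would fix an apartment $A_1$ with $S\subseteq A_1$; this exists because $S$ is a Weyl chamber of $X$, so $S=f(C)$ for a chart $f\in\App$ and a Weyl chamber $C$ of $\MS$, and one may take $A_1=f(\MS)$. Then $x\in A_1$, the germ $\Delta_x S$ is contained in $A_1$, and $\partial S$ is a chamber of $\partial A_1$. Next, represent $c$ by a Weyl chamber $T$ with $\partial T=c$ and apply (A4) to the pair $S,T$: there are sub-Weyl chambers $S'\subseteq S$ and $T'\subseteq T$ lying in a common apartment $A_2$. By Lemma~\ref{Lem_subWeyl} the chamber $S'$ is a translate of $S$, hence $\partial S'=\partial S$, so both $c=\partial T'$ and $\partial S$ are chambers of $\partial A_2$, and $S'\subseteq A_1\cap A_2$. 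Thus $A_1$ carries the germ and $A_2$ carries $c$, and they already share the sub-Weyl chamber $S'$.

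The heart of the argument is to merge $A_1$ and $A_2$ into a single apartment $A$. Since $\binfinity X$ is a spherical building (Theorem~\ref{Thm_buildingAtInfinity}), the Weyl group $\sW$ is finite and the gallery distance from $\partial S$ to $c$ in $\binfinity X$ is bounded, so I would argue by induction on this distance. In the base case $c=\partial S$ and $A_1$ itself works. For the inductive step, choose a chamber $c'$ adjacent to $c$ that is one step closer to $\partial S$; the inductive hypothesis yields an apartment $A'$ with $\Delta_x S\subseteq A'$ and $c'\in\partial A'$. Crossing the panel $p$ shared by $c$ and $c'$ is realized by reflecting a half-apartment of $A'$ across the affine wall $H$ with $\partial H=p$, using the exchange condition (EC) (available since $X$ is an affine building) together with (A4) to produce the neighbouring apartment that meets $A'$ in that half-apartment. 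By the apartment--boundary bijection of Theorem~\ref{Thm_buildingAtInfinity}, the apartment $A$ obtained after the final exchange satisfies $c\in\partial A$.

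The hard part is to guarantee that $\Delta_x S$ survives every exchange. The point is that each reflection must be performed on the half-apartment lying on the side of $H$ \emph{away} from $x$, so that a neighbourhood $S\cap B_\varepsilon(x)$ of $x$ in $S$ stays in the retained half and hence in $A$; concretely one should choose each wall $H$ far enough from $x$ that it misses this neighbourhood, the freedom to do so coming from (A1) and the translation group $\WT$. Verifying that such far-side exchanges exist, namely that the neighbouring apartment meets the current one in a half-apartment whose bounding wall avoids the germ, and that the exchange genuinely advances the boundary chamber from $c'$ to $c$, is the main obstacle; once this bookkeeping is in place one obtains $\Delta_x S\subseteq A$ together with $c\in\partial A$, as required.
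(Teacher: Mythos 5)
Your overall skeleton---induct on the gallery distance from $\partial S$ to $c$ in $\binfinity X$, crossing one panel at a time while keeping the germ---is exactly the shape of the proof the paper points to (Parreau's Proposition~1.8). But the step that carries the whole argument, the panel-crossing, is not established, and the tool you name for it cannot do the job. The exchange condition (EC) takes as \emph{input} two apartments that already intersect in a half-apartment and returns a third; at the inductive step you have only the single apartment $A'$ with $c'\in\partial A'$, and the task is precisely to \emph{construct} a second apartment whose boundary contains $c$ and which meets $A'$ in a half-apartment. Invoking (EC) for that is circular. (It is also not clearly available: the proposition's hypotheses give (A1)--(A4) and (A6), while Bennett's equivalence of (A6) with (EC) is proved assuming (A1)--(A5).) The correct tool is the sundial configuration (Property~\ref{Prop_sundial}): since $\binfinity X$ is a spherical building (Theorem~\ref{Thm_buildingAtInfinity}), the chamber $c$, which contains a panel of $\partial A'$ but is not in $\partial A'$, is opposite two chambers $d_1,d_2$ of $\partial A'$; the apartments of $\binfinity X$ spanned by $c$ and $d_i$ correspond to affine apartments $A_1,A_2$, and axiom (A6) together with Proposition~\ref{Prop_A5} shows that $A'\cap A_1$ and $A'\cap A_2$ are half-apartments covering $A'$ and meeting in a hyperplane $H$. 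This is exactly the ``main obstacle'' you leave open, so by your own admission the proof is incomplete at its crucial point.

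Your bookkeeping for the germ is also misconceived, in two ways. First, there is no freedom to push the wall $H$ away from $x$ using (A1) and translations: in the sundial configuration $H=A'\cap A_1\cap A_2$ is uniquely determined by $A'$ and $c$, because opposite chambers of a spherical building span a unique apartment and apartments of $\binfinity X$ are in bijection with affine apartments. Second, no such freedom is needed: the two half-apartments cover $A'$, and a germ of a Weyl chamber never straddles a wall---even a wall through its base point, since the open Weyl chamber is a connected component of the complement of the walls of the apartment---so $\Delta_xS$ lies entirely in one of the two closed half-apartments. One simply keeps the apartment $A_i$ on that side; it contains $\Delta_xS$ and has $c\in\partial A_i$, which closes the induction. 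Once the sundial configuration replaces your appeal to (EC), and the wall-avoidance argument is replaced by this observation, your induction becomes the paper's proof; your preliminary step producing $A_2$ via (A4) is harmless but plays no role in it.
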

The proof of the proposition above is precisely the same as the proof of Proposition 1.8 in \cite{Parreau}. 
Parreau's proof uses the fact that $\binfinity X$ is a spherical building and that axioms (A1) to (A3) as well as  (A6) are satisfied. Recall that assuming (A1) to (A4) we where able to prove in Section \ref{Sec_infinity} that $\binfinity X$ is a spherical building.

\begin{corollary}\label{Cor_GG}
Any pair $(X,\App)$ satisfying all axioms but (A5) has property (GG).
\end{corollary}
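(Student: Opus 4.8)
The plan is to derive (GG) directly from Proposition~\ref{Prop_tec16}, which is available under the present hypotheses since $(X,\App)$ satisfies (A1)--(A4) and (A6), so that $\binfinity X$ is a spherical building by Theorem~\ref{Thm_buildingAtInfinity}. Fix a vertex $x$ and two germs of Weyl chambers $\Delta_x S$ and $\Delta_x T$ based at $x$, where $S$ and $T$ are $x$-based Weyl chambers. The parallel class $c \define \partial T$ is then a chamber of $\binfinity X$. I would apply Proposition~\ref{Prop_tec16} to this chamber $c=\partial T$ and to the $x$-based Weyl chamber $S$, obtaining an apartment $A$ with $\Delta_x S$ contained in $A$ and $\partial T$ a chamber of $\partial A$.

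It remains to show that $A$ also contains the germ $\Delta_x T$; granting this, $A$ is a common apartment for the two germs and (GG) follows. First note that $x\in A$, since $\Delta_x S$ is contained in the closed set $A$ and $x$ is its base point. As $\partial T$ is a chamber of $\partial A$ and $x\in A$, there is a unique $x$-based Weyl chamber $T'\subset A$ whose parallel class is $\partial T$: inside the apartment $A\cong\Lambda^n$ a Weyl chamber is pinned down by its base point together with its direction at infinity. By construction $\Delta_x T'\subset A$, so it suffices to identify the germs $\Delta_x T'=\Delta_x T$.

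The heart of the argument, and the step I expect to be the main obstacle, is thus the claim that the germ at $x$ of a Weyl chamber is determined by $x$ together with its parallel class: if two $x$-based Weyl chambers $T$ and $T'$ satisfy $\partial T=\partial T'$, then $\Delta_x T=\Delta_x T'$. To prove this I would invoke Proposition~\ref{Prop_parallel}, so that parallelism of $T$ and $T'$ yields a Weyl chamber $U\subseteq T\cap T'$, and then pass via Lemma~\ref{Lem_subWeyl} to a common sub-Weyl chamber. Since $x$ lies in both $T$ and $T'$, axiom (A2) gives that the intersection of the two ambient apartments is a closed convex set containing $x$ and $U$, with the transition map induced by an element of $\WT$. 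Convexity of this intersection, together with the fact that $U$ emanates from $x$ in the direction $\partial T=\partial T'$, should force $T\cap T'$ to be a neighborhood of $x$ in each of $T$ and $T'$, which is exactly $\Delta_x T=\Delta_x T'$. The delicate point is controlling the boundary faces of the cone $\Cf$ when extracting the neighbourhood from the convex set, and it is here that some care with the geometry of $\Cf$ and the order structure of $\Lambda$ will be required.
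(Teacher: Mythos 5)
Your overall strategy coincides with the paper's: the paper's proof of Corollary~\ref{Cor_GG} is precisely a one-line application of Proposition~\ref{Prop_tec16}. What you add is a literal reading of that proposition: it only places $\Delta_xS$ inside $A$ and $\partial T$ inside $\partial A$, so one must still show that the $x$-based Weyl chamber $T'\subset A$ with $\partial T'=\partial T$ satisfies $\Delta_xT'=\Delta_xT$. The paper suppresses this step entirely (it quotes the proposition as producing ``an apartment containing $S$ and a germ of $T$ at $x$''), so you have correctly isolated the one point where something remains to be proved.

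However, the argument you sketch for this bridging claim does not close, and the failure is exactly at the spot you flag. If ``convex'' in (A2) is read in the segment sense, the convex hull of $\{x\}\cup U$ is a blunt cone: in a chart where $T$ corresponds to $x_0+C$ for a Weyl chamber $C$ of $\MS$ and $U$ to $y_0+C$, this hull is $\{x_0\}\cup\bigcup_{s\in(0,1]}\bigl(x_0+s(y_0-x_0)+C\bigr)$, and a point of a boundary face $\{\alpha_i=\alpha_i(x_0)\}$ of $T$ lies in it only if $y_0$ lies on that same face; since $y_0\neq x_0$, at least one face of $T$ meets the hull only in $x_0$, while every ball $B_\varepsilon(x)$ meets every face of $T$ in further points. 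For $\Lambda=\R$ one can repair this by passing to the \emph{closed} convex hull, which is all of $T$; but for general $\Lambda$ there is no topological closure available, so segment-convexity genuinely cannot deliver the neighborhood of $x$, and your worry about ``the order structure of $\Lambda$'' is exactly the unfixable part of that route. The correct repair is that in this theory (A2)'s ``convex'' means an intersection of half-apartments (this is Bennett's definition, see \cite{Bennett} and \cite{Diss}), and with that notion the hull of $\{x\}\cup U$ is all of $T$ without any limiting argument: for a root $\alpha$, either $\alpha$ takes a negative value on the direction cone $C$, in which case $\alpha$ is unbounded below on $y_0+C$ and the half-apartment $\{\alpha\geq k\}$ cannot contain $U$; or $\alpha\geq 0$ on $C$, in which case $\{\alpha\geq k\}\ni x$ forces $T\subseteq\{\alpha\geq k\}$. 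Hence the intersection of the two ambient apartments contains $T$ entirely, so $T\subset A$; then $T$ and $T'$ are parallel $x$-based Weyl chambers in the single apartment $A$, hence translates of one another there (Proposition~\ref{Prop_parallel}, Lemma~\ref{Lem_subWeyl}) with the same base point, hence equal. With this correction your proof is complete, and it also recovers the stronger statement the paper's one-liner actually invokes, namely that the apartment contains one of the two Weyl chambers in full.
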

\begin{proof}
Let $S$ and $T$ be Weyl chambers both based at a point $x$. By Proposition~\ref{Prop_tec16} there exists an apartment $A$ of $X$ containing $S$ and a germ of $T$ at $x$.
\end{proof}

Notice that, by the previous corollary, such a pair $(X,\App)$ satisfies the assertion of Theorem~\ref{Thm_residue}, i.e. the germs at a fixed vertex form a spherical building. Hence the notion of opposite germs as defined in the introduction makes sense.

\begin{prop}\label{Prop_A3'}
If $(X,\App)$ is a pair satisfying all axioms but (A5) then property (A3') holds.
\end{prop}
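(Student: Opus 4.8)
The plan is to represent the two germs by sectors and to prove the statement by a gluing argument: first, axiom (A4) places downstream sub-sectors of the two germs into a common apartment, and then the exchange condition (A6) (together with the convexity supplied by (A2)) is used to pull each germ back to its original basepoint. Write $\mu=\Delta_xS$ and $\nu=\Delta_yT$ for the two germs, and let $c=\partial S$, $d=\partial T$ be their directions in the spherical building $\binfinity X$, which is available by Theorem~\ref{Thm_buildingAtInfinity}.

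I would begin by invoking (A4) to choose an apartment $A$ together with sub-sectors $S'\subset S$ and $T'\subset T$ contained in $A$; say $S'$ is based at a point $x'$ lying on $S$ past $x$, and $T'$ at a point $y'$ on $T$ past $y$. By Lemma~\ref{Lem_subWeyl} the sub-sectors $S'$ and $T'$ are translates of $S$ and $T$ in any apartment containing them, so $A$ carries the transported germs $\Delta_{x'}S'$ and $\Delta_{y'}T'$, with $\partial S'=c$ and $\partial T'=d$. Since Weyl chambers of $X$ are images of charts, the full sector $S$ lies in some apartment $A_S\in\App$, and $A_S$ contains both $\mu=\Delta_xS$ and the sub-sector $S'$; symmetrically there is an apartment $A_T$ containing $\nu$ and $T'$.

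The heart of the argument is then a gluing step. The apartments $A$ and $A_S$ share the sector $S'$, and by (A2) their intersection $A\cap A_S$ is a closed convex set containing $S'$, with $\mu$ lying in $A_S$ on the far side of $S'$ from $x'$ and $T'$ lying in $A$. Using the exchange condition (A6) --- equivalently (EC) --- one produces a single apartment $A_1$ that retains $\mu$ from $A_S$ and $T'$ from $A$; concretely one arranges the common part to be bounded by a wall $H$ so that $(A\bigoplus A_S)\cup H$ is an apartment, and one checks that $\mu$ and $T'$ fall on the two retained sides. Repeating the same exchange with $A_1$ and $A_T$ along the common sub-sector $T'$ yields an apartment $A_2$ containing both $\mu$ and $\nu$, which is exactly (A3'). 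Throughout, Corollary~\ref{Cor_GG} (property (GG)) and the correspondence between apartments of $X$ and apartments of $\binfinity X$ from Theorem~\ref{Thm_buildingAtInfinity} are used to control which chambers at infinity, and hence which apartments, are available for the exchange.

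The step I expect to be the main obstacle is precisely this gluing: coordinating the two distinct basepoints $x$ and $y$ so that each germ survives the exchange. Proposition~\ref{Prop_tec16} by itself does not suffice, since it fixes only one basepoint together with one prescribed direction at infinity and gives no control over a second, independently based germ; the exchange condition is what allows one to re-attach a germ at its original basepoint while preserving a chosen sector on the opposite side. A modest amount of case analysis --- for instance when $A\cap A_S$ already contains, or is contained in, a half-apartment, or when $c$ and $d$ are in special relative position in $\binfinity X$ --- will be needed in order to reduce to the situation in which (EC) applies directly.
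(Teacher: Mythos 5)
The central gluing step in your plan has a genuine gap. The exchange condition (EC)/(A6) applies only when two apartments intersect in \emph{precisely a half-apartment}, but in your configuration the intersection $A\cap A_S$ is, by (A2), merely a closed convex set containing the sector $S'$ --- such a set is in general nothing like a half-apartment (it can be as small as a sector-shaped region), and there is no ``modest case analysis'' that reduces it to one. Worse, even where an exchange does apply, it swaps sides across a \emph{single} wall $H$, whereas re-attaching the germ at the original basepoint $x$ after having passed to a sub-sector based at $x'$ requires, in general, crossing many walls (and over a general ordered abelian group $\Lambda$ possibly a non-discrete family of them). So a single application of (EC) to the pair $(A, A_S)$ cannot be expected to produce an apartment containing both $\mu=\Delta_xS$ and $T'$, and the subsequent exchange with $A_T$ inherits the same problem. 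The proof therefore breaks exactly at the step you yourself flag as the main obstacle.

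Your reason for discarding Proposition~\ref{Prop_tec16} is also mistaken, and this is precisely where the paper's proof goes a different (and working) way. The paper handles the ``two independent basepoints'' difficulty by an anchoring trick: take an apartment $A$ containing $x$ and $y$ (axiom (A3)), let $S_{xy}\subset A$ be the $x$-based Weyl chamber containing $y$, and let $S_{yx}$ be the $y$-based Weyl chamber of $A$ with $\partial S_{yx}$ opposite $\partial S_{xy}$ in $\partial A$, so that $x\in S_{yx}$. Applying Proposition~\ref{Prop_tec16} at $y$ to the germ $\Delta_yT$ and the chamber $\partial S_{yx}$ yields an apartment $A'$ containing $\Delta_yT$ and having $\partial S_{yx}$ at infinity; since the $y$-based representative of $\partial S_{yx}$ in $A'$ is $S_{yx}$ itself, the point $x$ lies in $A'$. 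A second application of Proposition~\ref{Prop_tec16}, now at $x$, to $\Delta_xS$ and the chamber $\partial S'_{xy}$ (where $S'_{xy}\subset A'$ is chosen so that $\Delta_yT$ is contained in it) gives an apartment containing $\Delta_xS$ and $S'_{xy}$, hence $\Delta_yT$. In other words, the control over the second germ that you claim Proposition~\ref{Prop_tec16} cannot provide is obtained by prescribing a direction at infinity whose representative based at one point \emph{contains} the other point; no appeal to (A4), to sub-sectors, or to (EC) is needed.
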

\begin{proof}
We need to prove that if $S$ and $T$ are Weyl chambers based at $x$ and $y$, respectively, then there exists an apartment containing a germ of $S$ at $x$ and a germ of $T$ at $y$.

By axiom $(A3)$ there exists an apartment $A$ containing $x$ and $y$. We choose an $x$-based Weyl chamber $S_{xy}$ in $A$ that contains $y$ and denote by $S_{yx}$ the Weyl chamber based at $y$ such that $\partial S_{xy}$ and $\partial S_{yx}$ are v in $\partial A$. Then $x$ is contained in $S_{yx}$. If $\Delta_yT$ is not contained in $A$ apply Proposition~\ref{Prop_tec16} to obtain an apartment $A'$ containing a germ of $T$ at $y$ and containing  $\partial S_{yx}$ at infinity. But then $x$ is also contained in $A'$. 

Let us denote by $S'_{xy}$ the unique Weyl chamber contained in $A'$ having the same germ as $S_{xy}$ at $x$.
Without loss of generality we may assume that the germ $\Delta_yT$ is contained in $S'_{xy}$. Otherwise $y$ is contained in a face of $S'_{xy}$ and we can replace $S'_{xy}$ by an adjacent Weyl chamber in $A'$ satisfying this condition.
A second application of Proposition~\ref{Prop_tec16} to $\partial S'_{xy}$ and the germ of $S$ at $x$ yields an apartment $A''$ containing $\Delta_xS$ and $S'_{xy}$ and therefore $\Delta_yT$.
\end{proof}

Propositions \ref{Prop_A5} to \ref{Prop_liftGallery} below  are due to Linus Kramer.

\begin{prop}\label{Prop_A5}
With $X$ as above let $A_i$ with $i=1,2,3$ be three apartments of $X$ pairwise intersecting in half-apartments. Then $A_1\cap A_2\cap A_3$ is either a half-apartment or a hyperplane.
\end{prop}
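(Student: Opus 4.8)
The plan is to carry out the whole argument inside the single apartment $A_1$, where the triple intersection $J\define A_1\cap A_2\cap A_3$ equals $M_{12}\cap M_{13}$ for the two half-apartments $M_{12}\define A_1\cap A_2$ and $M_{13}\define A_1\cap A_3$, and to read off the shape of $J$ from the relative position of their bounding walls $H_{12}$ and $H_{13}$. First I record that $J\neq\emptyset$: this is exactly axiom (A6), which is available because $X$ satisfies every axiom but (A5). Identifying $A_1$ with $\MS$, write $M_{12}=\{\alpha_1\geq 0\}$ and $M_{13}=\{\alpha_2\geq 0\}$ for affine forms with $H_{12}=\{\alpha_1=0\}$ and $H_{13}=\{\alpha_2=0\}$. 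For two half-apartments of an affine space only the following configurations occur: the walls coincide (giving $J=H_{12}$, a hyperplane, or $J=M_{12}$, a half-apartment, according to whether the half-apartments point oppositely or alike); the walls are parallel and nested (so one half-apartment contains the other and $J$ is the smaller one, a half-apartment); the walls are parallel and point away from each other (so $J=\emptyset$, excluded by (A6)); the walls are parallel and face each other (so $J$ is a bounded slab); or the walls cross transversally (so $J$ is a wedge). It remains to rule out the slab and the wedge, i.e.\ the two cases in which $H_{12}\neq H_{13}$ and $J$ is neither a half-apartment nor a hyperplane.

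For both bad cases I would exploit the third half-apartment $M_{23}\define A_2\cap A_3$, which contains $J$ and satisfies $M_{23}\cap M_{12}=M_{23}\cap M_{13}=J$. Using (A1) and (A2) I first normalise charts so that $A_1$ and $A_2$ agree on $M_{12}$ and $A_1$ and $A_3$ agree on $M_{13}$; then $\alpha_1,\alpha_2$ extend uniquely to affine forms on each of $A_2$ and $A_3$. Reading $M_{23}\cap M_{12}=J$ inside $A_2$ shows that the wall of $M_{23}$ agrees with $\{\alpha_2=0\}$ on an open (full-dimensional) subset of that hyperplane — this uses $H_{12}\neq H_{13}$ — hence the wall of $M_{23}$ equals $\{\alpha_2=0\}$ in $A_2$ and $M_{23}=\{\alpha_2\geq 0\}$ there. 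Symmetrically, reading $M_{23}\cap M_{13}=J$ inside $A_3$ forces the wall of $M_{23}$ to be $\{\alpha_1=0\}$ and $M_{23}=\{\alpha_1\geq 0\}$ in $A_3$.

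The contradiction then comes from a single point. Choose $q\in J$ with $\alpha_1(q)=0$ and $\alpha_2(q)>0$; such a $q$ exists in the wedge case because $\alpha_1,\alpha_2$ are independent, and in the slab case on the face $\{\alpha_1=0\}$ of $J$, where $\alpha_2$ is a positive constant. Viewed inside $A_2$ this point lies strictly on the positive side of the wall of $M_{23}$, so $q$ is an \emph{interior} point of $M_{23}$; viewed inside $A_3$ it satisfies $\alpha_1(q)=0$, so $q$ lies \emph{on} the wall of $M_{23}$. But $M_{23}=A_2\cap A_3$ and its wall are one and the same subset of $X$ no matter which apartment we use: by (A2) the transition between the charts of $A_2$ and $A_3$ on their overlap is the restriction of an element of $\WT$, which carries half-apartments to half-apartments and hence maps the interior and the wall of $M_{23}$ to themselves. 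Thus no point of $M_{23}$ can be at once interior and on the wall, so the slab and wedge configurations are impossible; only the coinciding and nested cases survive and $J$ is a hyperplane or a half-apartment.

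The step I expect to be the main obstacle is the identification of the wall of $M_{23}$ from within each of $A_2$ and $A_3$: one must keep careful track of the signs (i.e.\ which side the half-apartment lies on), justify that a wall is determined by its full-dimensional trace on $M_{12}$ (respectively $M_{13}$), and, crucially, argue that the single intrinsic wall $\partial M_{23}$ is forced to coincide with $H_{13}$ as seen in $A_2$ and with $H_{12}$ as seen in $A_3$. It is precisely the clash between these two descriptions of the same intrinsic object, controlled by (A2), that produces the contradiction.
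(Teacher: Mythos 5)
Your argument is correct in outline, but it is a genuinely different route from the one the paper takes. The paper does not prove Proposition~\ref{Prop_A5} on the affine level at all: it refers to the author's dissertation and stresses that that proof uses the fact that $\binfinity X$ is a spherical building, hence needs (A1)--(A4) as well as (A6), the case analysis being carried out in the finite, combinatorial setting of the Coxeter complex at infinity. You instead never leave the affine level: after normalising charts via (A1)/(A2) you classify the relative positions of the two walls inside $A_1$, invoke (A6) only to exclude the empty configuration, and kill the slab and wedge configurations by playing the two chart descriptions of $M_{23}=A_2\cap A_3$ against each other, the key observation being that ``lying on the wall'' versus ``being interior'' in a half-apartment is chart-independent because the transition function in (A2) is an affine map carrying half-apartments to half-apartments. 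This buys something real: you use only (A1), (A2) and (A6) --- neither (A3), (A4) nor the building at infinity --- so your proof is more elementary and logically lighter than the one the paper relies on. The price is exactly the step you flag yourself: the lemma that a half-apartment of $A_2$ whose trace on $M_{12}$ equals $J$ must have wall $H_{13}$, with the correct orientation. That lemma is true --- in both bad cases the boundary of $J$ inside $\{\alpha_1>0\}$ is a full-dimensional subset of $H_{13}$, and an affine form vanishing on such a set vanishes on the whole hyperplane and is then positively proportional to $\alpha_2$ --- but writing it out over a general ordered abelian group $\Lambda$ requires the $F$-module structure of $\Lambda$ (to divide by scalars from $F$) and different perturbation directions in the wedge case ($\dot\alpha_1,\dot\alpha_2$ independent, perturb transversally to $H_{13}$ only) and the slab case ($\dot\alpha_2=-\dot\alpha_1$, perturb along $\dot\alpha_1$), so this step deserves a complete proof rather than a sketch.
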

The proof of this proposition, which can be found in \cite{Diss}, uses the fact that $\binfinity X$ is a spherical building, hence (A1)-(A4) and axiom (A6).

\begin{property}{\bf The sundial configuration. } \label{Prop_sundial}
Let $A$ be an apartment in $X$ and let $c$ be a chamber not contained in $\partial A$ but containing a panel of $\partial A$. Then $c$ is opposite to two uniquely determined chambers $d_1$ and $d_2$ in $\partial A$. Hence there exist apartments $A_1$ and $A_2$ of $X$ such that $\partial A_i$ contains $d_i$ and $c$ with $i=1,2$.  The three apartments $\partial A_1,\partial A_2$ and $\partial A$ pairwise intersect in half-apartments. Axiom $(A6)$ together with the proposition above implies that their intersection is a hyperplane. 
\end{property}

\begin{prop}\label{Prop_liftGallery}
Let $x$ be an element of $X$. Let $(c_0, \ldots, c_k)$ be a minimal gallery in $\binfinity X$. We denote by $S_i$ the $x$-based representative  of $c_i$. If $(\pi_x(c_0), \ldots, \pi_x(c_k))$ is minimal in $\Delta_xX$, then there exists an apartment containing $\bigcup_{i=0}^k S_i$.
\end{prop}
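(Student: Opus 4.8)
The plan is to argue by induction on the length $k$ of the gallery. For $k=0$ a single $x$-based Weyl chamber $S_0$ lies in an apartment by axiom (A3), so there is nothing to prove. For the inductive step I would assume that an apartment $A$ has already been found with $S_0\cup\cdots\cup S_{k-1}\subseteq A$; this is legitimate because the truncated gallery $(c_0,\dots,c_{k-1})$ is again minimal in $\binfinity X$ and its image $(\pi_x(c_0),\dots,\pi_x(c_{k-1}))$ is again minimal in $\Delta_xX$. Note that $x\in A$ and $c_i=\partial S_i\in\partial A$ for $i\le k-1$. If $c_k\in\partial A$, then the unique $x$-based Weyl chamber of $A$ with boundary $c_k$ is, by uniqueness of $x$-based representatives, equal to $S_k$; hence $S_k\subseteq A$ and we are done. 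So the whole difficulty is concentrated in the case $c_k\notin\partial A$.

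In that case $c_{k-1}$ and $c_k$ are adjacent in $\binfinity X$ across a common panel $p$, and $c_k$ is a chamber not lying in $\partial A$ but containing the panel $p$ of $\partial A$. Let $H$ be the unique wall of $A$ through $x$ whose boundary $\partial H$ is the wall of $\partial A$ carrying $p$, and let $M$ be the closed half-apartment of $A$ bounded by $H$ on the side of the germ $\pi_x(c_{k-1})$. I would first record the key reduction: $M$ already contains $S_0,\dots,S_{k-1}$. Indeed, minimality of $(c_0,\dots,c_k)$ forces the wall $\partial H$ to be crossed only at the final step, so $c_0,\dots,c_{k-1}$ all lie in the half-apartment $\partial M$ of $\partial A$ containing $c_{k-1}$; locally at $x$, minimality of $(\pi_x(c_0),\dots,\pi_x(c_k))$ in $\Delta_xX$ forces the local wall $\Delta_xH$ to be crossed only at the final step, so the germs $\pi_x(c_0),\dots,\pi_x(c_{k-1})$ lie in the half-apartment $\Delta_xM$. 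Since each $S_i$ with $i\le k-1$ is based at the point $x\in H$ and, being a Weyl chamber, lies on a single closed side of $H$ determined by its germ (with $S_{k-1}$ touching $H$ in a face), these two facts together give $S_i\subseteq M$.

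With this reduction in hand it suffices to produce one apartment $A'$ with $M\subseteq A'$ and $c_k\in\partial A'$: since $x\in\partial M=H\subseteq A'$ and $c_k\in\partial A'$, the $x$-based representative of $c_k$ in $A'$ must be $S_k$, it lies on the side of $H$ opposite to $M$, and hence $A'$ contains $S_0\cup\cdots\cup S_k$. To build $A'$ I would invoke the sundial configuration (Property~\ref{Prop_sundial}) for the apartment $A$ and the chamber $c_k$: it yields an apartment $A_1$ of $X$ with $c_k\in\partial A_1$ and $\partial A_1\cap\partial A$ equal to the half-apartment $\partial M$. By axiom (A2) the intersection $A\cap A_1$ is then a half-apartment of $A$ whose boundary wall is parallel to $H$, and by Proposition~\ref{Prop_A5} together with (A6) the configuration of $A$, $A_1$ and the second sundial apartment is rigid enough to control this wall. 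The remaining point is to ensure that the resulting apartment actually passes through $x$ and contains all of $M$; here I would use Proposition~\ref{Prop_tec16} to control the germ at $x$ — realizing $c_k$ in an apartment that additionally contains $\pi_x(c_{k-1})$ — and then glue this local datum to $M$ across $H$ by the exchange condition (EC), the two half-apartments fitting together into the desired apartment $A'$.

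I expect the main obstacle to be precisely this last gluing: an apartment may contain a germ at $x$ without containing the full sector, and may contain a chamber at infinity without containing the corresponding $x$-based sector, so the local datum at $x$ and the datum at infinity must be reconciled inside a single apartment. This is exactly the place where both minimality hypotheses are indispensable — infinity-minimality keeps $c_0,\dots,c_{k-1}$, and hence the far ends of $S_0,\dots,S_{k-1}$, on the correct side of $H$, while germ-minimality keeps the base point $x$ on the bounding wall and the germs on the correct side — and it is what makes the half-apartment $M$ survive the exchange while $S_k$ is attached across $H$.
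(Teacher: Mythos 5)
Your skeleton --- induction on $k$ together with the sundial configuration of Property~\ref{Prop_sundial} --- is exactly the route the paper indicates (the paper itself only says ``in \cite{Diss} Proposition~\ref{Prop_liftGallery} is proved by induction on $k$ using the sundial configuration''), and your reduction is also the right one: after the inductive step it suffices to produce an apartment containing the half $M$ of $A$ bounded by the wall $H$ through $x$ together with $c_k$ at infinity. But the decisive step is missing. Applying the sundial to $A$ and $c_k$ yields apartments $A_1,A_2$ whose triple intersection with $A$ is a wall $H'$ with $\partial H'=\partial H$, and nothing in Proposition~\ref{Prop_A5}, in (A6), or in the sundial controls \emph{where} $H'$ sits inside its parallel class; in particular $H'$ may lie strictly on the $M$-side of $x$, in which case $x\notin A_1$ and $A_1$ contains none of the $x$-based chambers. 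You acknowledge this obstacle, but your proposed repair --- Proposition~\ref{Prop_tec16} plus a gluing via (EC) --- is not an argument: (EC) applies to two apartments that already intersect in a half-apartment, which is precisely what you have not produced; moreover (EC) is not available at this point without circularity, since in this paper it is deduced (Section~\ref{Sec_A6'}) from (CO) and the building structure on germs, and (CO) is Corollary~\ref{Cor_CO}, a direct consequence of the very proposition you are proving (Bennett's equivalence of (EC) with (A6) assumes (A5), which is also not among the standing hypotheses here).

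The missing idea is a dichotomy on the position of the sundial wall $H'$, and it is exactly there --- not in any gluing --- that the local minimality hypothesis does its work. Label the sundial apartments so that $A\cap A_1$ is the half of $A$ bounded by $H'$ whose chambers at infinity lie on the $c_{k-1}$-side, and $A\cap A_2$ the complementary half. If $x\in A\cap A_1$, then $M\subseteq A\cap A_1$, and since $x\in A_1$ and $c_k\in\partial A_1$, uniqueness of $x$-based representatives gives $S_k\subseteq A_1$; so $A_1$ contains $S_0\cup\cdots\cup S_k$ and the induction closes. If $x\notin A_1$, then $x$ lies in the open half $A\setminus A_1\subseteq A\cap A_2$, hence $x\in A_2$ and, again by uniqueness, $S_k\subseteq A_2$. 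Now take the isomorphism $\phi\colon A_2\to A$ furnished by (A2), which fixes $A\cap A_2$ pointwise. At infinity $\partial\phi$ fixes the half $\partial(A\cap A_2)$, in particular the panel $p$ and the chamber of that half containing $p$; being injective, it must therefore send $c_k$ to the only other chamber of $\partial A$ containing $p$, namely $c_{k-1}$, so $\phi(S_k)=S_{k-1}$. Since $x$ is interior to $A\cap A_2$, a whole neighborhood of $x$ in $S_k$ lies in $A\cap A_2$ and is fixed by $\phi$, whence $\pi_x(c_k)=\Delta_xS_k=\Delta_xS_{k-1}=\pi_x(c_{k-1})$: the germ gallery stammers, contradicting its minimality in $\Delta_xX$. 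So the bad case cannot occur. Without this (or an equivalent) argument your proof has a genuine gap at its central point; as written, both minimality hypotheses are only invoked rhetorically in the paragraph where the actual work has to happen.
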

In \cite{Diss} Proposition~\ref{Prop_liftGallery} is proved by induction on $k$ using the sundial configuration. 

\begin{corollary}\label{Cor_CO}
Every pair $(X,\App)$ satisfying all axioms but (A5) has the property (CO).
\end{corollary}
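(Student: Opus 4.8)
The goal is to establish property (CO): two Weyl chambers $S$ and $T$, based at the same vertex $x$ and whose germs are opposite in the building $\Delta_xX$, are contained in a unique common apartment. The plan is to exploit Proposition~\ref{Prop_liftGallery} together with the fact, recorded just before this corollary, that $\Delta_xX$ is a spherical building (so that opposite chambers are connected by minimal galleries whose length equals the diameter). The key observation is that when the germs $\pi_x(S)$ and $\pi_x(T)$ are opposite in $\Delta_xX$, a minimal gallery in $\binfinity X$ from $\partial S$ to $\partial T$ projects under $\pi_x$ to a minimal gallery in $\Delta_xX$, precisely because opposite chambers are at maximal distance and no folding can occur.

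First I would choose a minimal gallery $(c_0,\dots,c_k)$ in $\binfinity X$ with $c_0=\partial S$ and $c_k=\partial T$; since the germs are opposite, $k$ equals the diameter of the building and the projected sequence $(\pi_x(c_0),\dots,\pi_x(c_k))$ is a minimal gallery in $\Delta_xX$ from $\pi_x(S)$ to $\pi_x(T)$. Applying Proposition~\ref{Prop_liftGallery} to this gallery yields an apartment $A$ containing $\bigcup_{i=0}^k S_i$, where $S_i$ is the $x$-based representative of $c_i$; in particular $A$ contains both $S=S_0$ and $T=S_k$, which gives existence of a common apartment. I would then note that $\partial A$ must contain the whole minimal gallery at infinity, so $\partial A$ is an apartment of the spherical building $\binfinity X$ containing the opposite chambers $\partial S$ and $\partial T$.

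For uniqueness, I would argue that any apartment $B$ containing both $S$ and $T$ induces, via the correspondence of Theorem~\ref{Thm_buildingAtInfinity}, an apartment $\partial B$ of $\binfinity X$ containing the opposite chambers $\partial S$ and $\partial T$. In a spherical building two opposite chambers lie in a \emph{unique} apartment, so $\partial B=\partial A$. Since apartments of $X$ are in one-to-one correspondence with apartments of $\binfinity X$ by Theorem~\ref{Thm_buildingAtInfinity}, it follows that $B=A$. This pins down the common apartment uniquely.

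The main obstacle I anticipate is the uniqueness half, specifically the step identifying $B=A$ from $\partial B=\partial A$: the one-to-one correspondence in Theorem~\ref{Thm_buildingAtInfinity} asserts that each apartment of $X$ gives a distinct apartment at infinity, but one must check carefully that an apartment of $X$ is genuinely determined by its boundary, i.e.\ that distinct apartments of $X$ cannot have the same image in $\binfinity X$. I would address this by recalling that an apartment $A$ is the convex hull (in the building-theoretic sense, via axioms (A1)--(A2)) of any pair of opposite chambers it contains at infinity together with the basepoint structure; concretely, $S$ and $T$ based at $x$ with opposite germs already determine $A$ as the union of the lifted minimal gallery, and any other apartment through $S$ and $T$ would have to contain this same union and hence coincide with $A$ by convexity. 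The projection argument establishing minimality of $(\pi_x(c_i))$ is routine once one invokes that opposition realizes the diameter, so the weight of the proof rests on the rigidity provided by uniqueness of apartments through opposite chambers in the spherical building $\binfinity X$.
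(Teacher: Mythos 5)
Your proof is correct, and its core is exactly the paper's argument: the paper proves Corollary~\ref{Cor_CO} in three lines by choosing a minimal gallery from $\partial S$ to $\partial T$ in $\binfinity X$, taking $x$-based representatives, and invoking Proposition~\ref{Prop_liftGallery}. You are in fact more careful than the paper on one point: Proposition~\ref{Prop_liftGallery} requires the projected gallery $(\pi_x(c_0),\dots,\pi_x(c_k))$ to be minimal in $\Delta_xX$, a hypothesis the paper never verifies; your observation that opposition of the germs forces $k$ to equal the diameter and the projection to be a non-stuttering minimal gallery fills that gap. Where you genuinely diverge is uniqueness, which the paper's proof of the corollary does not address at all (it is dispatched elsewhere by the one-line remark that uniqueness follows from axiom (A2)). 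Your primary route --- through the one-to-one correspondence of Theorem~\ref{Thm_buildingAtInfinity} and uniqueness of apartments containing opposite chambers in the spherical building $\binfinity X$ --- is a legitimate citation but heavier than needed, and your supporting justification contains a misstatement: in rank at least $2$ the union $\bigcup_{i=0}^k S_i$ of the lifted minimal gallery is a \emph{proper} subset of $A$, since a minimal gallery to the opposite chamber crosses only $d+1$ of the $|\sW|$ chambers of an apartment at infinity, so $A$ is not ``the union of the lifted gallery.'' What actually closes uniqueness is the convexity argument you append as a fallback: if $B$ also contains $S$ and $T$, then by (A2) the intersection $A\cap B$ is convex and contains $S\cup T$, whose convex hull in the model space is everything, hence $B=A$. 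Stating that directly (as the paper does) lets you drop the correspondence argument and the misstated claim entirely.
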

\begin{proof}
Choose a minimal gallery $(c_0,c_1,\ldots, c_n)$ from $c_0=\partial S$ to $c_n=\partial T$ and consider the representatives $S_i$ of $c_i$ based at $x$. Then $S_0=S$ and $S_n=T$ and Proposition~\ref{Prop_liftGallery} implies the assertion.
\end{proof}

\section{Property (A3')}\label{Sec_A3'}

Assume that $(X,\App)$ is a pair satisfying axioms (A1) to (A3) and properties (GG) and (CO).
By axiom (A2) we may observe that the apartment in property (CO) is unique. 

\begin{thm}\label{Thm_residue}
\index{{generalized affine building}!{residue}}
Assume that $(X,\App)$ is a pair satisfying axioms (A1) to (A3) and property (GG).
Then $\Delta_xX$ is a spherical building of type $\RS$ for all $x$ in $X$. Furthermore $\Delta_xX$ is independent of $\App$.
\end{thm}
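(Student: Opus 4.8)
The plan is to mimic, almost verbatim, the proof of Theorem~\ref{Thm_buildingAtInfinity}: I would equip $\Delta_xX$ with a chamber-complex structure, exhibit the candidate apartments as spherical Coxeter complexes of type $\RS$, and then verify the building axioms in the form used there, namely that any two chambers lie in a common apartment and that two apartments sharing a chamber are isomorphic via an isomorphism fixing their intersection (condition (B2'') of \cite[p.~76/77]{Brown}). The element of $\WT$ furnished by axiom (A2) will produce the required isomorphisms, and property (GG) will play the role that (A4) played for the building at infinity.

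First I would fix the simplicial structure. The simplices of $\Delta_xX$ are the germs $\Delta_xS$ of $x$-based Weyl simplices, partially ordered by the inclusion relation recalled before the statement; the chambers are the germs of $x$-based Weyl chambers, two of them being adjacent when they admit representatives based at $x$ sharing a common panel. This makes $\Delta_xX$ a chamber complex. Next I would describe the apartments: for every apartment $A\ni x$ with chart $f\in\App$ I consider the subcomplex $\Sigma_A\subseteq\Delta_xX$ of germs having an $x$-based representative contained in $A$. Transporting through $f$ identifies the $x$-based Weyl simplices in $A$ with the $x$-based Weyl simplices of $\MS$, and passing to germs identifies $\Sigma_A$ with the link of $f^{-1}(x)$ in $\MS$, which is exactly the spherical Coxeter complex of the finite Weyl group $\sW$, i.e. the Coxeter complex of type $\RS$. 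I declare the $\Sigma_A$ to be the apartments of $\Delta_xX$; that they are spherical Coxeter complexes of type $\RS$ will deliver the type and the sphericity once the building axioms are checked.

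The two building axioms then read as follows. That any two chambers lie in a common apartment is precisely property (GG): two $x$-based Weyl chambers are contained in a common apartment $A$ of $X$, so their germs both lie in $\Sigma_A$. For the gluing axiom (B2''), suppose $\Sigma_A$ and $\Sigma_{A'}$ share a chamber $\Delta_xS$. Picking representatives $S\subseteq A$ and $S'\subseteq A'$ with $\Delta_xS=\Delta_xS'$, the set $S\cap S'$ is a neighbourhood of $x$ in both, so $A\cap A'$ contains a germ of a Weyl chamber at $x$; by (A2) there are charts $f,g$ of $A,A'$ and $w\in\WT$ with $f|_{f^{-1}(A\cap A')}=(g\circ w)|_{f^{-1}(A\cap A')}$. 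The element $w$ induces a simplicial isomorphism $\Sigma_A\to\Sigma_{A'}$ of Coxeter complexes, and since $f$ and $g\circ w$ agree on the convex overlap it fixes every germ represented inside $A\cap A'$, hence fixes $\Sigma_A\cap\Sigma_{A'}$. Following \cite[p.~76/77]{Brown} this shows that $\Delta_xX$ is a spherical building of type $\RS$.

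For the final assertion, independence of $\App$, I would argue that the simplices of $\Delta_xX$ (germs of Weyl simplices) together with their face- and adjacency-relations are expressed purely through intersections of Weyl simplices in $X$ near $x$, without reference to any particular chart; combined with the fact that the resulting building structure determines its apartment system, this shows $\Delta_xX$ does not depend on the chosen atlas. I expect the main obstacle to be the verification of (B2''): one must make sure the transition element $w$ from (A2) descends to an honest simplicial isomorphism of the two germ-apartments that is the \emph{identity} on their common simplices, i.e. that local agreement of the charts on the convex overlap forces agreement of the induced maps on \emph{all} germs represented there, and not merely on the single shared chamber one starts with.
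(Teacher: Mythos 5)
Your first half --- the chamber-complex structure on $\Delta_xX$, the subcomplexes $\Sigma_A$ as Coxeter complexes of type $\RS$, property (GG) supplying the ``two chambers in a common apartment'' axiom, and axiom (A2) supplying the isomorphism needed for (B2'') --- is essentially identical to the paper's own proof of the first assertion, and it works.

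The gap is in the final assertion, the independence of $\Delta_xX$ from $\App$. You claim the simplices of $\Delta_xX$ and their face and adjacency relations ``are expressed purely through intersections of Weyl simplices in $X$ near $x$, without reference to any particular chart.'' This begs the question: a Weyl simplex of $X$ is \emph{defined} as the image of a Weyl simplex of $\MS$ under a chart in $\App$, so the collection of Weyl simplices --- and hence the collection of germs at $x$ --- depends a priori on the atlas. If $\App\subset\App'$ is a strictly larger atlas, then $\App'$ produces additional $x$-based Weyl chambers, and the whole issue is whether these produce \emph{new} germs. Your appeal to the uniqueness of the apartment system of a spherical building is the correct reduction (it shows the two buildings coincide once their chamber sets agree), but you never show the chamber sets agree, which is the actual content of the statement.

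The paper closes exactly this gap with a nontrivial argument: assume w.l.o.g.\ $\App\subset\App'$, suppose $c$ is a chamber of the $\App'$-building of germs not belonging to the $\App$-building, choose a chamber $d$ opposite $c$ and the unique apartment of the germ building containing both; it corresponds to an $\App'$-apartment $A'$ containing $x$-based representatives $S_c$ and $S_d$. Pick points $y$ and $z$ in the interiors of $S_c$ and $S_d$ and invoke axiom (A3) \emph{for the atlas} $\App$ to obtain an $\App$-apartment $A$ containing $y$ and $z$. By convexity (A2) the segment $\seg_A(y,z)$ lies in $A\cap A'$, and since $x\in\seg(y,z)$ (because $y$ and $z$ lie in opposite $x$-based Weyl chambers of $A'$), the point $x$ lies in $A$; the unique $x$-based Weyl chambers of $A$ containing $y$ and $z$ then realize the germs $c$ and $d$ within the atlas $\App$, a contradiction. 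Some argument of this kind --- producing an $\App$-chart realizing any given $\App'$-germ --- is indispensable, and your proposal is missing it.
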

\begin{proof}
We verify the axioms of the definition of a simplicial building, which can be found on  page 76 in \cite{Brown}.
It is easy to see that $\Delta_xX$ is a simplicial complex with the partial order defined above. It is a pure simplicial complex, since each germ of a face is contained in a germ of a Weyl chamber. The set of equivalence classes determined by a given apartment of $X$ containing $x$ is a subcomplex of $\Delta_xX$ which is, obviously, a Coxeter complex of type $\RS$. Hence we define those to be the apartments of $\Delta_xX$. Therefore, by definition, each apartment is a Coxeter complex. 
Two apartments of $\Delta_xX$ are isomorphic via an isomorphism fixing the intersection of the corresponding apartments of $X$, hence fixing the intersection of the apartments of $\Delta_xX$ as well. Finally due to property (GG) any two chambers are contained in a common apartment and we can conclude that $\Delta_xX$ is a spherical building of type $\RS$.

Let $\App'$ be a different system of apartments of $X$ and assume w.l.o.g.~that $\App\subset \App'$.
We will denote by $\Delta$ the spherical building of germs at $x$ with respect to $\App$ and  by $\Delta'$ the building at $x$ with respect to $\App'$. Since spherical buildings have a unique apartment system $\Delta$ and $\Delta'$ are equal if they contain the same chambers. Assume there exists a chamber $c\in\Delta'$ which is not contained in $\Delta$. Let $d$ be a chamber opposite $c$ in $\Delta'$ and $a'$ the unique apartment containing both. Note that $a'$ corresponds to an apartment $A'$ of $X$ having a chart in $\App'$. There exist $\App'$-Weyl chambers $S_c$, $S_d$ contained in $A$ representing $c$ and $d$, respectively. Choose a point $y$ in the interior of $S_c$ and let $z$ be contained in the interior of $S_d$. By axiom $(A3)$ there exists a chart $f\in \App$ such that the image $A$ of $f$ contains $y$ and $z$. Then $x$ is contained in $A$ as well, since  $x$ is contained in $\seg_A(y,z)\define\seg(y,z)\cap A$ and the segment $\seg_A(y,z)$ is a subset of $A\cap A'$. By construction the unique $x$-based Weyl chamber in $A$ which contains $y$ has germ $x$ and the unique $x$-based Weyl chamber in $A$ containing  $z$ has germ $d$. This contradicts the assumption that $c$ isn't contained in $\Delta$. Hence $\Delta=\Delta'$.
\end{proof}

As in \cite[Prop 1.15]{Parreau} we observe:
\begin{lemma}\label{lem_tec1}
Let $S$ and $T$ be two $x$-based Weyl chambers. Then there exists an apartment containing $S$ and a germ of $T$ at $x$.  
\end{lemma}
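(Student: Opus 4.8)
The plan is to upgrade the germ of $S$ produced by property (GG) to the \emph{full} Weyl chamber $S$ by means of property (CO), exploiting the uniqueness of the apartment spanned by a pair of opposite chambers in the spherical building $\Delta_xX$ (Theorem~\ref{Thm_residue}). Throughout I write $\Delta_xA$ for the subcomplex of $\Delta_xX$ consisting of the germs at $x$ of Weyl simplices contained in an apartment $A$; by Theorem~\ref{Thm_residue} this is precisely an apartment of the spherical building $\Delta_xX$.

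First I would apply property (GG) to the two $x$-based Weyl chambers $S$ and $T$, obtaining an apartment $A$ of $X$ such that the apartment $\Delta_xA$ of $\Delta_xX$ contains both chambers $\sigma=\Delta_xS$ and $\tau=\Delta_xT$. Inside the spherical Coxeter complex $\Delta_xA$ there is a (unique) chamber $\bar\sigma$ opposite to $\sigma$; let $\bar S$ denote the full $x$-based Weyl chamber in $A$ whose germ is $\bar\sigma$. Since $\sigma$ and $\bar\sigma$ lie at maximal gallery distance within an apartment of $\Delta_xX$, they are opposite as chambers of $\Delta_xX$, i.e.\ the germs of $S$ and $\bar S$ are opposite at $x$. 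I would then invoke property (CO): because $S$ and $\bar S$ are $x$-based Weyl chambers with opposite germs, they lie in a (unique) common apartment $B$. By construction $B$ contains the full Weyl chamber $S$, which is what we want; it remains only to verify that $B$ also contains the germ $\tau$ of $T$.

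For this last step, which is the crux of the argument, I would compare the two apartments $\Delta_xA$ and $\Delta_xB$ of $\Delta_xX$. Both of them contain the opposite pair $\sigma,\bar\sigma$: the former by construction, the latter because $B$ contains $S$ and $\bar S$ and hence their germs. Since in a spherical building a pair of opposite chambers determines a unique apartment, it follows that $\Delta_xB=\Delta_xA$. Consequently $\tau$, being a chamber of $\Delta_xA$, is a chamber of $\Delta_xB$ as well; that is, the germ $\Delta_xT$ is contained in $B$. Thus $B$ is an apartment containing $S$ and a germ of $T$ at $x$, as required.

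The main obstacle is precisely this identification $\Delta_xB=\Delta_xA$: it is what lets the germ $\tau$ ``ride along'' when we pass from the germ of $S$ to the whole of $S$, and it relies essentially on the spherical-building structure furnished by Theorem~\ref{Thm_residue} together with the uniqueness of the apartment through two opposite chambers. The remaining points—existence of the opposite chamber $\bar\sigma$ in the spherical Coxeter complex, existence and uniqueness of its full $x$-based representative $\bar S$ inside $A$, and the passage from ``opposite in an apartment'' to ``opposite in the building''—are routine consequences of the theory of spherical buildings and of axioms (A1)--(A3).
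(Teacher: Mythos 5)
Your proof is correct and is essentially the argument the paper itself invokes by citation rather than writing out: it refers to \cite[Prop 1.15]{Parreau}, whose proof proceeds exactly as yours does, namely using (GG) to place both germs in one apartment $A$, taking the chamber of $\Delta_xA$ opposite $\Delta_xS$, applying (CO) to $S$ and the full $x$-based representative of that opposite germ, and then using the uniqueness of the apartment of the spherical building $\Delta_xX$ (Theorem \ref{Thm_residue}) through a pair of opposite chambers to conclude $\Delta_xB=\Delta_xA$ and hence that $\Delta_xT$ lies in $B$. So your proposal correctly supplies, in self-contained form, precisely the proof the paper delegates to its reference.
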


\begin{prop}
 Under the hypotheses of this section we have
\begin{itemize}
 \item[(A3')] Any two germs of Weyl chambers are contained in a common apartment. 
\end{itemize}
\end{prop}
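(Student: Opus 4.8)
The plan is to re-run the argument of Proposition~\ref{Prop_A3'} in spirit, but to replace each appeal to Proposition~\ref{Prop_tec16} (which relied on (A6)) by an appeal to Lemma~\ref{lem_tec1}, which is available here from (GG) and (CO). Concretely, given an $x$-based Weyl chamber $S$ and a $y$-based Weyl chamber $T$, I must produce an apartment containing $\Delta_xS$ and $\Delta_yT$. If $x=y$ this is exactly property (GG), so I would assume $x\neq y$. First I would invoke (A3) to fix an apartment $A$ containing both $x$ and $y$, choose an $x$-based Weyl chamber $S_{xy}\subset A$ that contains $y$, and let $S_{yx}\subset A$ be the $y$-based Weyl chamber whose germ at infinity $\partial S_{yx}$ is opposite $\partial S_{xy}$ in $\partial A$; then $x\in S_{yx}$.

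The first key step is a single application of Lemma~\ref{lem_tec1} to the two $y$-based Weyl chambers $S_{yx}$ and $T$. This yields an apartment $A'$ containing the whole of $S_{yx}$ together with the germ $\Delta_yT$. Since $x\in S_{yx}\subset A'$, the apartment $A'$ contains $x$ as well. This is exactly the output that the first application of Proposition~\ref{Prop_tec16} produced in the proof of Proposition~\ref{Prop_A3'}: there, carrying $\partial S_{yx}$ at infinity forced $S_{yx}$, hence $x$, into $A'$; here Lemma~\ref{lem_tec1} places $S_{yx}$ into $A'$ directly.

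Next I would locate inside $A'$ an $x$-based Weyl chamber $S'_{xy}$ with $\Delta_yT\subseteq S'_{xy}$. The point is that the open cone $\mathrm{int}(\Delta_yT)$ is a Weyl-chamber direction and therefore avoids every wall of the $x$-based chamber decomposition of $A'$: near $y$ only the walls $\{p:\alpha(p-x)=0\}$ with $\alpha(y-x)=0$ pass through $y$, and each such root has constant sign on the open germ, while every other wall is bounded away from $y$. Hence $\mathrm{int}(\Delta_yT)$ lies in a single open $x$-based chamber $S'_{xy}$, and passing to closures gives $\Delta_yT\subseteq S'_{xy}$ for a suitably small radius. This is the clean replacement for the ``$y$ lies in a face, pass to an adjacent chamber'' sentence of Proposition~\ref{Prop_A3'}, and it is the step I expect to require the most care, since it is where the combinatorics of germs (rather than the metric) does the real work.

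Finally I would apply Lemma~\ref{lem_tec1} a second time, now to the two $x$-based Weyl chambers $S'_{xy}$ and $S$, to obtain an apartment $A''$ containing all of $S'_{xy}$ and the germ $\Delta_xS$. Since $\Delta_yT\subseteq S'_{xy}\subseteq A''$, the apartment $A''$ contains both $\Delta_xS$ and $\Delta_yT$, which is precisely (A3'). The only genuinely new ingredient compared with Proposition~\ref{Prop_A3'} is the wall-avoidance observation of the previous paragraph; everything else is bookkeeping that transfers the earlier argument to the present, (A6)-free setting, with the ``whole chamber into the apartment'' feature of Lemma~\ref{lem_tec1} being exactly what guarantees $x\in A'$ in the first step and $\Delta_yT\subseteq A''$ in the last.
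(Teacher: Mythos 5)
Your proof is correct and follows essentially the same route as the paper's: two applications of Lemma~\ref{lem_tec1}, linked by the in-apartment observation that a germ based at one point of an apartment is contained in some Weyl chamber based at any other point of that apartment. The paper merely runs the argument in the opposite order (it first takes an $x$-based chamber $C$ containing $y$, applies Lemma~\ref{lem_tec1} to $C$ and $S$, and then transfers $\Delta_xS$ into a $y$-based chamber $D$ before the second application), and it leaves implicit the wall-avoidance justification of the transfer step that you spell out.
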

\begin{proof}
Let $S$ and $T$ be Weyl chambers based at $x$ and  $y$, respectively. 
By (A3) there exists a Weyl chamber $C$ based at $x$ containing $y$. Lemma~\ref{lem_tec1} implies that there exists an apartment $A$ containing $C$ and $\mu\define\Delta_xS$. But then there exists an $y$-based Weyl chamber $D$ in $A$ containing $\mu$. Applying \ref{lem_tec1} again, we obtain an apartment $A'$ containing $D$ and a germ of $T$ at $y$ and hence containing  $\mu=\Delta_xS$ and $\Delta_yT$. 
\end{proof}

\section{Retractions based at germs}\label{Sec_retractions}

Let throughout this section $(X,\App)$ be a pair satisfying axioms (A1), (A2) and (A3'') and fix an apartment $A$ in $X$ with chart $f \in \App$.

\begin{definition}\label{Def_vertexRetraction}
\index{vertex retraction}
Let $\mu$ be a germ of a Weyl chamber and $y$ a point in $X$, then, by (A3'),  there exists a chart $g\in\App$ such that $y$ and $\mu$ are contained in $g(\MS)$. 
By axiom (A2) there exists $w\in\aW$ such that $g\vert_{g^{-1}(f(\MS))}=(f\circ w)\vert_{g^{-1}(f(\MS))}$.
Hence we can define
$$
r_{A,\mu}(y) = (f\circ w\circ g^{-1} )(y).
$$
The map $r_{A,\mu}$ is called \emph{retraction onto $A$ centered at $\mu$}.
\end{definition}

\begin{prop}\label{Prop_r}
Fix an apartment $A$ of $X$ and let $\mu$ be a germ of a Weyl chamber in $A$. Then the following hold:
\begin{enumerate}
  \item The map $r_{A,\mu}$ is well defined.
  \item The restriction of the retraction $r_{A,\mu}$ to an apartment $A'$ containing $\mu$ is an isomorphism onto $A$.
\end{enumerate}
\end{prop}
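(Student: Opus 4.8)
The plan is to deduce both statements from the behaviour of the transition maps supplied by (A2) in a neighbourhood of the germ $\mu$, the decisive point being that an element of the affine Weyl group fixing the germ of a Weyl chamber must be trivial. Throughout I write $\nu\define f^{-1}(\mu)$ for the germ in $\MS$ corresponding to $\mu$ under the fixed chart $f$, and I use repeatedly that $\mu$, being the germ of a Weyl chamber, contains an open set, so that any two elements of $\WT$ agreeing on (a neighbourhood of) the preimage of $\mu$ coincide.

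For part (1), suppose $g_1,g_2\in\App$ are two charts whose images both contain $y$ and $\mu$, and let $w_1,w_2\in\WT$ be the elements from (A2) with $g_i=f\circ w_i$ on $g_i^{-1}(A)$; I must show $f\circ w_1\circ g_1^{-1}(y)=f\circ w_2\circ g_2^{-1}(y)$. Since $y$ and $\mu$ both lie in $g_1(\MS)\cap g_2(\MS)$, axiom (A2) also gives $u\in\WT$ with $g_1=g_2\circ u$ on $g_1^{-1}(g_2(\MS))$, a set containing the preimage of $y$ and of a neighbourhood of $\mu$. The relations $g_i=f w_i$ near $\mu$ yield $g_i^{-1}(\mu)=w_i^{-1}(\nu)$, and $g_1=g_2 u$ near $\mu$ gives $w_1^{-1}(\nu)=u^{-1}w_2^{-1}(\nu)$; hence $w_2 u w_1^{-1}\in\WT$ stabilises the germ $\nu$.

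The heart of the matter, and the step I expect to be the main obstacle, is the claim that any $v\in\WT$ stabilising a germ of a Weyl chamber is the identity. I would argue that $v$ fixes the base vertex of the germ, so that its linear part $\sigma\in\sW$ stabilises the underlying chamber cone; since $\sW$ acts simply transitively on Weyl chambers, $\sigma=\mathrm{id}$, whence $v$ is a translation fixing a point and therefore trivial. Applying this to $w_2 u w_1^{-1}$ gives $w_1 u^{-1}=w_2$. As $g_1=g_2 u$ also holds near $y$, we have $g_1^{-1}(y)=u^{-1}g_2^{-1}(y)$, so $f w_1 g_1^{-1}(y)=f w_1 u^{-1}g_2^{-1}(y)=f w_2 g_2^{-1}(y)$, which proves (1).

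For part (2), let $A'=g(\MS)$ be any apartment containing $\mu$, and let $w\in\WT$ be the element from (A2) with $g=f\circ w$ on $g^{-1}(A)$; this $w$ is uniquely determined because the overlap contains the full-dimensional germ $\mu$. For every $y\in A'$ the single chart $g$ contains both $y$ and $\mu$, so by part (1) the retraction may be computed with $g$, giving $r_{A,\mu}\vert_{A'}=f\circ w\circ g^{-1}$. This is the composition of the chart isomorphism $g^{-1}\colon A'\to\MS$, the Weyl isometry $w\colon\MS\to\MS$, and the chart isomorphism $f\colon\MS\to A$, hence an isomorphism of $A'$ onto $A$. As a consistency check, the defining relation $g=fw$ forces $r_{A,\mu}$ to be the identity on $A\cap A'$, and choosing $g=f$ shows $r_{A,\mu}\vert_A=\mathrm{id}_A$.
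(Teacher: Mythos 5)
Your proof is correct, and its overall skeleton matches the paper's: both arguments introduce the transition element between the two charts supplied by (A2) (your $u$ is the paper's $w_{12}$) and reduce well-definedness to the group identity $w_1=w_2u$. Where you differ is in how that identity is forced. The paper evaluates both retraction formulas on the intersection $C$ of the two $\mu$-representing Weyl chambers; since a neighbourhood of the base point in $C$ lies in $A\cap A_1\cap A_2$, both formulas restrict to the identity there, so $w_1$ and $w_2w_{12}$ agree \emph{pointwise} on an affinely spanning subset of $\MS$ and hence coincide --- the only rigidity input is that two elements of the affine Weyl group agreeing on such a set are equal. You instead show that the discrepancy $w_2uw_1^{-1}$ stabilises the germ $\nu=f^{-1}(\mu)$ \emph{setwise} and invoke the lemma that a germ-stabiliser in $\WT$ is trivial, proved via simple transitivity of $\sW$ on chambers. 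Your lemma is a stronger, reusable group-theoretic fact (it draws the conclusion from a weaker hypothesis); the one gloss in your sketch is the passage from ``stabilises the germ'' to ``its linear part stabilises the chamber cone'', which needs the standard fact that two Weyl chambers based at the same vertex sharing a germ are equal (distinct chambers meet in a proper face). The paper's route is more elementary, needing no structure theory of $\sW$, though its write-up is terse exactly where yours is careful: strictly one must first intersect $C$ with a chamber of $A$ representing $\mu$ before asserting that both formulas are the identity there. Your treatment of part (2) coincides with the paper's, which dismisses it as clear by definition.
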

\begin{proof}
The second assertions is clear by definition.
To prove the first let $y$ be a point in $X$ assume that $A_i\define f_i(\MS)$, $i=1,2$ are two apartments both containing $\mu$ and $y$. We let $w_i$ be the element of $\aW$ appearing in the definition of $r_{A,\mu}(y)$ with respect to $f_i$. It suffices to prove
\begin{equation}\label{Equ_tec28}
f\circ w_1\circ f_1^{-1}(y)=f\circ w_2\circ f_2^{-1}(y).
\end{equation}
By assumption the germ $\mu$ is contained in $A_1\cap A_2$ hence there exists by $(A2)$ an element $w_{12}\in\aW$ such that 
$$ 
f_2\circ w_{12}\;\vert_{f_1^{-1}(f_2(\MS))} = f_1 \;\vert_{f_1^{-1}(f_2(\MS))} .
$$
Since $y\in A_1\cap A_2$, we have 
\begin{equation}\label{Equ_tec29}
f\circ w_2\circ f_2^{-1} 
	= f\circ w_2\circ f_2^{-1}(f_2\circ w_{12}(f_1^{-1}(y)))
	= f\circ w_2 w_{12}(f_1^{-1}(y)).
\end{equation}
There are unique Weyl chambers $S_1$ and $S_2$ contained in $A_1$ and $A_2$, respectively, satisfying the property that $\Delta_xS_i=\mu$, $i=1,2$. Since equation (\ref{Equ_tec29}) is true for all $y\in A_1\cap A_2$, it is in particular true for the intersection $C$ of the Weyl chambers $S_1$ and $S_2$. Therefore 
$$
f\circ w_1\circ f_1^{-1}(C) = f\circ w_2\circ w_{12} \circ f_1^{-1}(C)
$$
and hence $w_2 w_{12}=w_1.$ Combining this with (\ref{Equ_tec29}) yields equation (\ref{Equ_tec28}).
\end{proof}

\section{Finite covering property}\label{Sec_FC1}

Assume that the pair $(X,\App)$ satisfies axioms (A1) to (A3) and has properties (GG) and (CO). Recall that we did prove in the previous section, that (A3') follows from this. Alternatively we may assume in place of (GG) axiom (A3').

\begin{lemma}\label{Lem_cover}
Given an apartment $A$ and a point $z$ in $X$. Then $A$ is contained in the (finite) union of all $z$-based Weyl chambers which are parallel to a Weyl chamber in $A$.
\end{lemma}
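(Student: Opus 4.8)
The goal is to show that every point of a fixed apartment $A$ lies in one of finitely many $z$-based Weyl chambers, each parallel to a Weyl chamber of $A$. The natural strategy is to exploit the spherical building $\binfinity X$ at infinity, whose existence is guaranteed under our hypotheses by Theorem~\ref{Thm_buildingAtInfinity}, together with property (CO) to lift the finitely many opposite chambers of $\partial A$ into full apartments through $z$. First I would record that the boundary $\partial A$ is a Coxeter complex of type $\RS$ and hence contains only \emph{finitely many} chambers $c_1,\ldots,c_m$; this is where the finiteness ultimately comes from, since the Weyl group $\sW$ is finite. Each chamber $c_i$ is the parallel class $\partial S_i$ of some Weyl chamber $S_i$ in $A$, and I replace each $S_i$ by the $z$-based Weyl chamber $S_i^z$ with $\partial S_i^z = c_i$; this $S_i^z$ is parallel to $S_i\subset A$ by construction, so each candidate chamber in the asserted covering is indeed parallel to a Weyl chamber in $A$.

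Next I would fix an arbitrary point $p\in A$ and produce a $z$-based Weyl chamber in our finite list that contains $p$. The cleanest route is to choose a $z$-based Weyl chamber $S$ whose germ $\Delta_z S$ points ``toward'' $p$, so that $p\in S$; concretely, using (A3) select an apartment $B$ containing both $z$ and $p$, and inside $B$ pick the $z$-based Weyl chamber containing $p$ (if $p=z$ the claim is trivial, so assume $p\neq z$). The chamber $\partial S$ at infinity is some chamber of $\partial B$, and the point is that $\partial S$ need not lie in $\partial A$. To control it, I would invoke property (CO) in the form already exploited in Section~\ref{Sec_localStructure}: for each chamber $c_i$ of $\partial A$, the germs $\Delta_z S_i^z$ generate, via (CO) applied to opposite germs, apartments through $z$; the union of the corresponding $z$-based chambers recovers all of $A$ because every direction in $A$ from the vantage of infinity is captured by one of the finitely many $c_i$.

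The technical heart, and what I expect to be the main obstacle, is the passage from ``$p$ lies in an apartment $B$ through $z$'' to ``$p$ lies in one of the \emph{finitely many chosen} chambers $S_i^z$.'' The difficulty is that the apartment $B$ may differ from $A$, so the chamber of $B$ containing $p$ need not be one of the $S_i^z$ directly; I must argue that its parallel class at infinity nonetheless coincides with some $c_i$, which requires relating $\partial B$ to $\partial A$. Here I would use that the retraction onto $A$ centered at an appropriate germ (Definition~\ref{Def_vertexRetraction}, Proposition~\ref{Prop_r}) restricts to an isomorphism on apartments and carries Weyl chambers to Weyl chambers; applying such a retraction $r_{A,\mu}$ sends the $B$-chamber containing $p$ to a chamber of $A$, which is $\partial$-equivalent to one of the $S_i$, pinning down the correct index $i$. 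The candidate $S_i^z$ then contains $p$ because its germ matches that of the $B$-chamber at $z$ and the two chambers, sharing a germ at $z$ and having equal parallel classes, agree on the relevant segment $\seg(z,p)$.

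Finally I would assemble the pieces: as $p$ ranges over $A$, the index $i$ produced above always lies in $\{1,\ldots,m\}$, so $A\subseteq\bigcup_{i=1}^m S_i^z$, a finite union of $z$-based Weyl chambers each parallel to a Weyl chamber in $A$. The word ``finite'' in the statement is thus justified by the finiteness of the chamber set of the Coxeter complex $\partial A$, and no metric input is needed at any stage, consistent with the paper's overall aim of a combinatorial treatment. The one subtlety to handle carefully is the boundary case where $p$ lies on a wall of $A$, so that $p$ is contained in a face shared by two chambers $c_i,c_j$; there I simply note that $p$ then lies in both $S_i^z$ and $S_j^z$ (indeed in their common face through $z$), which causes no harm to the covering claim.
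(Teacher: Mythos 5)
Your proposal diverges from the paper's proof in a way that introduces a genuine circularity, and its key covering step does not go through. First, you invoke the building at infinity $\binfinity X$ via Theorem~\ref{Thm_buildingAtInfinity}, but that theorem requires axioms (A1)--(A4), whereas the standing hypotheses of Section~\ref{Sec_FC1} are only (A1)--(A3) together with (GG) and (CO). In the paper's logical scheme (A4) lies \emph{downstream} of this lemma: Lemma~\ref{Lem_cover} is used to prove (FC'') (Proposition~\ref{Prop_FC'}), which yields (A5) in Section~\ref{Sec_A5}, which Section~\ref{Sec_lp} relies on, and only then is (A4) established in Section~\ref{Sec_A4}. So appealing to $\binfinity X$ here assumes what this lemma is ultimately needed to prove. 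A closely related gap: you simply posit, for each chamber $c_i$ of $\partial A$, a $z$-based representative $S_i^z$ with $\partial S_i^z=c_i$. When $z\notin A$, the existence of such representatives is exactly the nontrivial content of the lemma under the available hypotheses; it cannot be taken as given.

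Second, your ``technical heart'' does not work as described. Proposition~\ref{Prop_r} only controls $r_{A,\mu}$ on apartments \emph{containing} $\mu$; your apartment $B$ through $z$ and $p$ need not contain $\mu$, so nothing guarantees that the $B$-chamber containing $p$ is mapped to a Weyl chamber of $A$, and even if it were, ``sharing a germ at $z$ and having equal parallel classes, hence agreeing on $\seg(z,p)$'' is not a valid inference --- retractions can fold, and equality of classes at infinity says nothing about which points a chamber contains. The paper avoids all of this by a local, pointwise construction: for $p\in A$, axiom (A3) gives an apartment $A'$ containing $z$ and $p$; let $S_+$ be the $p$-based Weyl chamber of $A'$ containing $z$. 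Since $\Delta_pX$ is a spherical building (Theorem~\ref{Thm_residue}, which needs only (GG)), one can choose a $p$-based Weyl chamber $S_-\subset A$ whose germ is \emph{opposite} $\Delta_pS_+$; property (CO) places $S_-$ and $S_+$ in a common apartment $A''$, and the $z$-based translate $T$ of $S_-$ in $A''$ satisfies $S_-\subset T$ because $z\in S_+$ and the germs are opposite. Thus $p\in T$, and $T$ is a $z$-based Weyl chamber parallel to a Weyl chamber of $A$ --- existence of the covering chambers and the covering itself are obtained in one stroke. Only the final finiteness observation in your proposal (finitely many chambers in $\partial A$, coming from the finiteness of $\sW$) matches the paper's argument.
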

\begin{proof}
In case $z$ is contained in $A$ this is obvious. Hence we assume that $z$ is not contained in $A$. For all $p\in A$ there exists, by (A3), an apartment $A'$ containing $z$ and $p$. 
Let $S_+\subset A'$ be a $p$-based Weyl chamber containing $z$.  We denote by $\sigma_+$ its germ at $p$. There exists a $p$-based Weyl chamber $S_-$ in $A$ such that its germ $\sigma_-$ is opposite $\sigma_+$ at $p$. By property (CO) the Weyl chambers $S_-$ and $S_+$ are contained in a common apartment $A''$. 
Let $T$ be the unique $z$-based translate of $S_-$ in $A''$. Since $z\in S_+$ and $\sigma_+$ and $\sigma_-$ are opposite we have that $S_-\subset T$. In particular the point $p$ is contained in $T$. 
The fact that there are only finitely many chambers in $\partial A$ completes the proof. 
\end{proof}

\begin{prop}\label{Prop_FC'}
Let  $x$ and $y$ be points in $X$ and let $A$ be an apartment containing $X$ and $y$. For all $z\in X$ the following is true:
\begin{itemize}[label={(FC'')}, leftmargin=*]
  \item[$\mathrm{(FC'')}$] The segment $\seg_A(x,y)=\seg(x,y)\cap A$ of $x$ and $y$ is contained in a finite union of Weyl chambers based at $z$.
\end{itemize}
Furthermore, is $\mu$ a $z$-based germ of a Weyl chamber, then $\seg(x,y)$ is contained in a finite union of apartments containing $\mu$.
\end{prop}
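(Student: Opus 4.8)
The first statement is immediate from the covering lemma. Since $\seg_A(x,y)=\seg(x,y)\cap A$ is by definition a subset of $A$, and Lemma~\ref{Lem_cover} asserts that $A$ itself is contained in the finite union of all $z$-based Weyl chambers parallel to a Weyl chamber in $A$, the set $\seg_A(x,y)$ is a fortiori contained in that same finite union. This is exactly property (FC''); no further work is needed for it.

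For the additional claim I would argue by retraction. Fix a Weyl chamber $S$ representing $\mu$, so that $\mu=\Delta_zS$, and an apartment $A_0$ containing $S$; then $z\in A_0$ and, applying the covering lemma inside $A_0$, the apartment $A_0$ is the union of the finitely many $z$-based Weyl chambers $D_1,\dots,D_N$ it contains. Let $r=r_{A_0,\mu}$ be the retraction of Section~\ref{Sec_retractions}, which is available here since (A3') implies (A3''), and which by Proposition~\ref{Prop_r} restricts to an isomorphism onto $A_0$ on every apartment containing $\mu$. Given $w\in\seg(x,y)$, property (A3'') furnishes an apartment $A_w$ containing $w$ and $\mu$; the point $r(w)$ then lies in $A_0=\bigcup_jD_j$, say in $D_{j(w)}$, and $(r\vert_{A_w})^{-1}(D_{j(w)})$ is a $z$-based Weyl chamber of $A_w$ that contains $w$. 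Thus every point of $\seg(x,y)$ lies in an apartment through $\mu$, and under $r$ these apartments all fold onto the \emph{fixed} finite family $D_1,\dots,D_N$.

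The main obstacle is to replace the a priori $w$-dependent apartments $A_w$ by a finite subfamily. Here I would use the local building $\Delta_zX$ from Theorem~\ref{Thm_residue}: the directions from $z$ toward points of $\seg(x,y)$ are, after applying $r$, carried into the finitely many chambers $\mu_j=\Delta_zD_j$ of the Coxeter complex $\Delta_zA_0$, while $\seg(x,y)$ itself is bounded. For each index $j$ I would try to assemble a single apartment $B_j$ containing $\mu$ together with the whole portion of $\seg(x,y)$ whose image under $r$ meets $D_j$, realizing $\mu$ and the appropriate germ opposite to it in one apartment by means of (CO) and combining this with the boundedness of the segment so that the lifted chamber already captures that portion. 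The apartments $B_1,\dots,B_N$ would then cover $\seg(x,y)$, each containing $\mu$. The genuinely delicate point — and the step I expect to be hardest — is precisely this reassembly, i.e.\ showing that germs of $z$-based Weyl chambers sharing a direction and meeting the bounded set $\seg(x,y)$ can be placed in a common apartment through $\mu$; the remainder is routine bookkeeping with Proposition~\ref{Prop_r} and Lemma~\ref{Lem_cover}.
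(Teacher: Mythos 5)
Your first paragraph is correct and is exactly the paper's argument: $\seg_A(x,y)\subset A$ together with Lemma~\ref{Lem_cover} gives (FC'') immediately.

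For the second claim, however, there is a genuine gap, and it is precisely the one you flag yourself: your retraction argument produces one apartment $A_w$ through $\mu$ for each point $w$ of the segment, and you have no mechanism for passing to a finite subfamily. That ``reassembly'' step is not routine bookkeeping; it is the entire content of the statement, and it is left unproven. The paper avoids the difficulty by never working pointwise on the segment at all. It covers the whole apartment $A$ (hence $\seg_A(x,y)$) as follows: let $S_1,\dots,S_N$ be the finitely many $z$-based Weyl chambers from Lemma~\ref{Lem_cover}, so that $A\subset\bigcup_{i}S_i$. Fix $i$. By (GG) (or (A3')) there is an apartment $\widetilde{A}_i$ containing both germs $\mu$ and $\Delta_zS_i$; let $S_i^{op}$ be the $z$-based Weyl chamber of $\widetilde{A}_i$ whose germ at $z$ is opposite $\Delta_zS_i$. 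By (CO) there is a unique apartment $A_i$ containing $S_i\cup S_i^{op}$. The idea you were missing is that this $A_i$ automatically contains $\mu$: in the spherical building $\Delta_zX$ the opposite chambers $\Delta_zS_i$ and $\Delta_zS_i^{op}$ lie in a unique apartment, so $\Delta_zA_i=\Delta_z\widetilde{A}_i$, which contains $\mu$; hence $\mu$ is contained in $A_i$. Then $\seg_A(x,y)\subset A\subset\bigcup_{i=1}^{N}A_i$ with each $A_i$ containing $\mu$, and finiteness comes for free from Lemma~\ref{Lem_cover} rather than from any compactness-type property of the segment. Note also that you were chasing a stronger statement than is needed: the second clause, as it is actually used in Section~\ref{Sec_A5}, concerns the segment $\seg_A(x,y)$ inside the given apartment $A$ (the paper's proof in fact covers all of $A$), not the full set $\seg(x,y)$ in $X$; insisting on the latter is part of what made your reassembly step look intractable.
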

\begin{proof}
Let $I$ be a (finite) index set of the $z$-based Weyl chambers $S_i$ with equivalence class in $\partial A$. Then, by Lemma~\ref{Lem_cover}, we may conclude that 
$\seg_A(x,y)\subset A\subset \bigcup_{i\in I} S_i$.
We fix $i$ and deduce from (GG) (or property (A3') instead) that there is an apartment $\widetilde{A}_i$ containing $\mu$ and $\Delta_zS_i$. Let $S_i^{op}$ be a Weyl chamber in $\widetilde{A}_i$ whose germ is opposite $\Delta_zS_i$. Then property (CO) implies that there is a unique apartment $A_i$ containing the union of $S_i$ and $S_i^{op}$. Hence $A$ and therefore $\seg_A(x,y)$ is contained in the finite union $\bigcup_{i\in I} A_i$. 
Hence the proposition. 
\end{proof}

\section{Verifying (A5)}\label{Sec_A5}

Assume that $(X,\App)$ is a pair satisfying axioms (A1), (A2), (A3'') and property (FC''). Observe that this is in particular satisfied under the hypotheses of Section~\ref{Sec_FC1} and that these conditions suffice to define retractions centered at germs,  as we did in Section~\ref{Sec_retractions}.

\begin{prop}\label{Prop_retraction}
For all apartments $A$ and germs $\mu$ of Weyl chambers contained in $A$ the retraction $r_{A,\mu}$, as defined in \ref{Def_vertexRetraction}, is distance non-increasing. In particular we conclude that the pair $(X,\App)$ satisfies axiom $(A5)$.
\end{prop}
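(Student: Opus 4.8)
The plan is to show that for any two points $y,z\in X$ we have $d(r_{A,\mu}(y), r_{A,\mu}(z)) \leq d(y,z)$. Since $r_{A,\mu}$ restricts to an isometry on any apartment containing $\mu$ (by Proposition~\ref{Prop_r}, as it is an isomorphism onto $A$ and isomorphisms preserve the $\Lambda$-metric), the strategy is to reduce the general case to a chain of steps each taking place inside a single apartment. The key tool is the decomposition of the segment $\seg(y,z)$ furnished by Proposition~\ref{Prop_FC'}: applied with basepoint the germ $\mu$, it tells us that $\seg(y,z)$ is covered by a \emph{finite} union of apartments $A_1,\dots,A_k$, each containing $\mu$.

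First I would fix $y,z$ and invoke Proposition~\ref{Prop_FC'} to obtain finitely many apartments $A_1, \dots, A_k$, all containing $\mu$, whose union contains $\seg(y,z)$. The aim is then to choose a finite sequence of intermediate points $y = p_0, p_1, \dots, p_m = z$ lying on the segment, arranged so that each consecutive pair $p_{j-1}, p_j$ lies in a common apartment $A_{i(j)}$ from the list. Because the $p_j$ lie on $\seg(y,z)$ and because the $\Lambda$-metric restricted to an apartment is additive along segments, one gets $\sum_j d(p_{j-1},p_j) = d(y,z)$. Applying the retraction and using that $r_{A,\mu}$ is an isometry on each $A_{i(j)}$, we obtain $d(r_{A,\mu}(p_{j-1}), r_{A,\mu}(p_j)) = d(p_{j-1},p_j)$ for each $j$. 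The triangle inequality on the single apartment $A$ (which \emph{does} hold, since $A$ is isometric to the model space $\MS$, where the $\Lambda$-metric satisfies all axioms of Definition~\ref{Def_metric}) then yields
$$
d(r_{A,\mu}(y), r_{A,\mu}(z)) \;\leq\; \sum_{j=1}^{m} d(r_{A,\mu}(p_{j-1}), r_{A,\mu}(p_j)) \;=\; \sum_{j=1}^{m} d(p_{j-1},p_j) \;=\; d(y,z),
$$
which is exactly the distance non-increasing property required by (A5). Once this inequality is established, the remaining condition $r_{A,\mu}^{-1}(x)=\{x\}$ for the basepoint is immediate from Proposition~\ref{Prop_r}(2), since the retraction is injective on each apartment through $\mu$; this verifies axiom (A5).

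The main obstacle I anticipate is the combinatorial bookkeeping that produces a \emph{valid} subdivision $y=p_0,\dots,p_m=z$ of the segment realizing the finite cover $\{A_i\}$ as a genuine partition into subsegments, each living in one apartment. One must argue that the segment $\seg(y,z)$, being a closed convex $\Lambda$-valued geodesic, is cut by the finitely many apartments $A_i$ into finitely many consecutive pieces; picking the $p_j$ as the endpoints of these pieces requires knowing that the intersection of $\seg(y,z)$ with each $A_i$ is a subsegment (a convex subset), which follows from axiom (A2) since $f_i^{-1}(A\cap A_i)$ is closed and convex and $\seg$ is preserved under the chart isometries. Care is needed because $\Lambda$ is only a totally ordered abelian group, so one cannot appeal to connectedness or compactness arguments valid over $\R$; instead the finiteness in Proposition~\ref{Prop_FC'} must be leveraged directly to keep the number of pieces $m$ finite. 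Finally, one checks additivity $\sum_j d(p_{j-1},p_j)=d(y,z)$ by noting each $p_j\in\seg(y,z)$ and using the defining property of segments together with the additivity of the $\Lambda$-metric along geodesics within a single apartment.
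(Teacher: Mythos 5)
Your proposal is correct and follows essentially the same route as the paper's own proof: cover the segment $\seg(y,z)$ by finitely many apartments containing $\mu$ via Proposition~\ref{Prop_FC'}, subdivide the segment into consecutive pieces each lying in one such apartment, use that $r_{A,\mu}$ restricts to an isomorphism (hence isometry) on each of these apartments by Proposition~\ref{Prop_r}, and finish with the triangle inequality inside the single apartment $A$. Your explicit treatment of the subdivision's existence (via (A2) and convexity) and of the condition $r_{A,\mu}^{-1}(x)=\{x\}$ is, if anything, slightly more careful than the paper's.
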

\begin{proof}
Let $x$ and $y$ be points in an apartment $B$ of $X$. By (FC'') there exists a finite collection of apartments $A_0,\ldots, A_n$ each containing $\mu$ such that the union contains the segment of $x$ and $y$ in $B$. Let these apartments be enumerated such that $A_i\cap A_{i+1} \neq \emptyset$ for all $i=0,\ldots, n-1$. 
Observe that one can find a finite sequence of points $x_i$, $i=0,\ldots,n$ with $x_0=0$ and $x_n=y$ such that 
$$
d(x,y)=\sum_{i=0}^{n-1} d(x_i,x_{i+1})
$$
and such that $A_i$ contains $x_i$ and $x_{i+1}$.
Note further that for all $i$ the restriction of $r_{A,\mu}$ to $A_i$ is an isomorphism onto $A$. Hence the distance $d(x_i, x_{i+1})$ of $x_i$ and $x_{i+1}$ is equal to $d(\rho(x_i), \rho(x_{i+1}))$ for all $i\neq N$. Since the metric $d$ satisfies the triangle inequality on each apartment we have that $d(r(x),r(y))\leq \sum_{i=0}^{n-1} d(r(x_i), r(x_{i+1})) = \sum_{i=0}^{n-1} d(x_i, x_{i+1})= d(x,y)$.
\end{proof}

\section{Again: local structure}\label{Sec_lp}

Assume that $(X,\App)$ is a pair satisfying axioms (A1), (A2), (A3') and condition (CO). 
Under these assumptions Sections~\ref{Sec_retractions}, \ref{Sec_FC1} and \ref{Sec_A5} imply the existence of a distance diminishing retraction based at a germ of a Weyl chamber. That is (A5) holds. 

Alternatively we could assume that axioms (A1), (A2), (A3'), (A5)  are satisfied and that condition (CO) holds. These are precisely the properties needed in the present section.
Notice that the proof of Proposition~\ref{Prop_tec16} uses (A3') in its full power and that this axiom might therefore not be weakened to (A3'').

\begin{prop}\label{Prop_1.19Parreau}\label{Prop_tec16a}
Let $S$ be a Weyl chamber and $\mu$ a germ of another Weyl chamber, then there exists an apartment containing $\mu$ and a sub-Weyl chamber of $S$.
\end{prop}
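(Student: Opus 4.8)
The plan is to pass to the building at infinity and reduce everything to Proposition~\ref{Prop_tec16}. Under the standing hypotheses of this section the set $\binfinity X$ is a spherical building (Theorem~\ref{Thm_buildingAtInfinity}), so the Weyl chamber $S$, based at a vertex $x$, determines a chamber $c\define\partial S$ of $\binfinity X$. Let $T$ be a Weyl chamber based at the vertex $z$ which represents the germ $\mu$, i.e.\ $\Delta_z T=\mu$; such a $T$ exists since $\mu$ is by assumption the germ of a Weyl chamber. First I would apply Proposition~\ref{Prop_tec16} to the chamber $c$ and the Weyl chamber $T$. This produces an apartment $A$ with $\mu=\Delta_z T\subseteq A$ and with $c$ a chamber of $\partial A$. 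This is precisely the step where axiom (A3') is used in its full strength, as remarked at the opening of the section.

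It then remains to upgrade the relation $c\in\partial A$ to an honest sub-Weyl chamber of $S$ lying inside $A$. The key observation is that, by the construction of $\binfinity X$ in Theorem~\ref{Thm_buildingAtInfinity}, the chambers of the sub-complex $\partial A$ are exactly the parallel classes of Weyl chambers contained in $A$. Hence $c\in\partial A$ furnishes a Weyl chamber $S^{*}\subseteq A$ with $\partial S^{*}=c=\partial S$. Thus $S$ and $S^{*}$ lie in the same parallel class, so they are parallel in the sense of Definition~\ref{Def_parallelSimplices} (using that parallelism is an equivalence relation, Proposition~\ref{Prop_parallelEquiv}), and by that very definition $S\cap S^{*}$ contains a Weyl chamber $S'$. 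This $S'$ is a sub-Weyl chamber of $S$ and at the same time satisfies $S'\subseteq S^{*}\subseteq A$. Therefore $A$ is an apartment containing both the germ $\mu$ and the sub-Weyl chamber $S'$ of $S$, which is the assertion.

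The hard part is really just the justification that Proposition~\ref{Prop_tec16} is at our disposal here, rather than the final extraction of $S'$. Its proof rests on $\binfinity X$ being a spherical building and on axiom (A3'), which is among our standing assumptions and, as the remark opening this section stresses, may \emph{not} be weakened to (A3''). Once this is granted, the only genuinely geometric input is the passage from a common direction at infinity to a common sub-Weyl chamber, and this is exactly what Definition~\ref{Def_parallelSimplices} delivers; everything else is bookkeeping at infinity. It is worth noting that this argument follows the same order of dependence as in \cite{Parreau}, where the analogue of Proposition~\ref{Prop_tec16} is established before, and then used to prove, the present statement.
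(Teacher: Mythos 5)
Your proposal has a genuine gap: the two tools it rests on are not available under the standing hypotheses of Section~\ref{Sec_lp}, and invoking them makes the argument circular. This proposition sits inside the proof that item~(\ref{v}) of Theorem~\ref{MainThm} implies item~(\ref{iv}); the only assumptions in force are (A1), (A2), (A3') and (CO). Your opening claim that ``under the standing hypotheses of this section $\binfinity X$ is a spherical building (Theorem~\ref{Thm_buildingAtInfinity})'' is unjustified: that theorem needs (A1)--(A4), and (A4) is exactly what gets proved \emph{later}, in Section~\ref{Sec_A4}, using the results of the present section (via Proposition~\ref{Prop_parreau1.15}). Worse, Proposition~\ref{Prop_tec16} is proved, following Parreau's Proposition~1.8, from (A1)--(A3) together with axiom (A6); axiom (A6) appears in items~(\ref{i}) and~(\ref{iii}) of the main theorem but nowhere in item~(\ref{v}), so it cannot be used on this branch of the equivalence at all. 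You also misread the remark at the head of the section: the observation that (A3') is used ``in its full power'' concerns the proof of the present proposition (the one carried out in this section), not Proposition~\ref{Prop_tec16}, whose proof never touches (A3'). So the step you describe as ``the hard part'' --- justifying that Proposition~\ref{Prop_tec16} is at our disposal --- is not merely hard, it is unavailable; the final extraction of a sub-Weyl chamber from a common chamber at infinity is fine in itself, but it is bookkeeping on top of machinery you are not allowed to use here.

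The paper's own proof avoids the building at infinity entirely and is instead a metric argument. Choose an apartment $A$ containing $S$, pick $z\in S$ suitably far out, let $S^{+}$ be the $z$-based sub-Weyl chamber of $S$ and $S^{-}$ the $z$-based Weyl chamber of $A$ opposite $S^{+}$, and consider the retraction $r_{A,\Delta_zS^{+}}$, which exists and is distance non-increasing by Sections~\ref{Sec_retractions}--\ref{Sec_A5} (these show that (A5) is a consequence of this section's hypotheses). Since $r_{A,\Delta_zS^{+}}$ is distance diminishing, the image of $\mu$ lies in a ball around the base point of $S$ which, for $z$ chosen appropriately, is contained in $S^{-}$. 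Then (A3') gives an apartment $\hat{A}$ containing $\mu$ and $\Delta_zS^{+}$, and (CO), applied to $S^{+}$ and the Weyl chamber $\hat{S}^{-}\subset\hat{A}$ opposite $\Delta_zS^{+}$ at $z$, yields an apartment containing $S^{+}$ (a sub-Weyl chamber of $S$) and $\hat{S}^{-}$, hence containing $\mu$. If you want to repair your write-up, this is the route to follow: the whole point of the section is to obtain (GG), (A4) and eventually (EC) \emph{without} assuming them, so nothing downstream of (A4) or (A6) may be cited.
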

\begin{proof}
Let $x$ be the base point of $S$ and let $\mu$ be based at $y$. Choose an apartment $A$ containing $S$ and let $z$ be a point in $S$. Denote by $S^+$ the sub-Weyl chamber of $S$ based at $z$ and refer to the $z$-based Weyl chamber in $A$ which is opposite $S^+$ at $z$ by $S^-$. Let further $r$ stand for the retraction onto $A$ centered at the germ of $S^+$ at $z$, which exists by (A5). For some $\varepsilon \geq 0$ the ball $B$ of radius $d(x,y)+\varepsilon$ around $x$ contains the image $r(\mu)$, since $r$ is distance diminishing. One can choose $z$ such that $B$ is contained in $S^-$.

By (A3') there exists an apartment $\hat{A}$ containing $\mu$ and $\Delta_zS^+$. We denote by $\hat{S}^-$ the unique $z$-based Weyl chamber in $\hat{A}$ whose germ at $z$ is opposite $\Delta_zS^+$. By construction $r$ maps $\hat{S}^-$ onto $S^-$. The Weyl chambers $S^+$ and $\hat{S}^-$ are opposite at $z$ and are therefore, by  property (CO),  contained in a common apartment. 
\end{proof}

\begin{corollary}\label{Cor_GGa}
Under the hypothesis of this section property (GG) holds for $X$. 
\end{corollary}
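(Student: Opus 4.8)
The goal is Corollary~\ref{Cor_GGa}: under the hypotheses of this section (axioms (A1), (A2), (A3') and condition (CO)), property (GG) holds, i.e.\ any two germs of Weyl chambers \emph{based at the same vertex} lie in a common apartment. The plan is to reduce this immediately to the proposition just proved, Proposition~\ref{Prop_tec16a}, which produces, for a Weyl chamber $S$ and an arbitrary germ $\mu$, an apartment containing $\mu$ together with a sub-Weyl chamber of $S$. The point is that a sub-Weyl chamber of $S$ shares the germ of $S$ whenever it is taken based at the same vertex, so the apartment handed to us by \ref{Prop_tec16a} already carries the germ data we need, provided we start from two chambers based at the common vertex.

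Concretely, I would begin by taking two germs of Weyl chambers $\mu = \Delta_x S$ and $\nu = \Delta_x T$ both based at a single vertex $x$. Applying Proposition~\ref{Prop_tec16a} to the Weyl chamber $S$ and the germ $\nu$ yields an apartment $A$ that contains $\nu$ and a sub-Weyl chamber $S'$ of $S$. The remaining step is to observe that $S'$, being a sub-Weyl chamber of the $x$-based chamber $S$, is a translate of $S$ (Lemma~\ref{Lem_subWeyl}(1)); hence the unique $x$-based Weyl chamber in $A$ with the same germ as $S'$ represents $\mu$, and so $A$ contains $\mu = \Delta_x S$ as well. Thus $A$ contains both germs $\mu$ and $\nu$, which is exactly property (GG).

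The only genuinely delicate point is bookkeeping about base points: Proposition~\ref{Prop_tec16a} gives a sub-Weyl chamber $S'$ of $S$, but $S'$ need not be based at $x$. I would handle this by noting that the \emph{germ} at $x$ is what matters: since $S' \subset S$ and $S$ is $x$-based, the germ $\Delta_x S$ is detected inside $S'\cap B_\varepsilon(x)$ for small $\varepsilon$, so once $A$ contains $S'$ it contains a neighborhood of $x$ in $S$, whence $\Delta_x S$ is contained in $A$ in the sense defined in Section~\ref{Sec_localStructure}. With that observation the corollary follows with essentially no further computation, since everything rests on Proposition~\ref{Prop_tec16a} and the sub-Weyl-chamber lemma. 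I expect no serious obstacle beyond making this base-point/germ containment argument precise.

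\begin{proof}
Let $\mu=\Delta_xS$ and $\nu=\Delta_xT$ be two germs of Weyl chambers based at the same vertex $x$. By Proposition~\ref{Prop_tec16a}, applied to the Weyl chamber $S$ and the germ $\nu$, there is an apartment $A$ containing $\nu$ together with a sub-Weyl chamber $S'$ of $S$. Since $S'$ is contained in the $x$-based chamber $S$, for sufficiently small $\varepsilon\in\Lambda^{+}$ the intersection $S\cap B_\varepsilon(x)$ is contained in $S'\subset A$; hence $\mu=\Delta_xS$ is contained in $A$ as well. Therefore $A$ contains both germs $\mu$ and $\nu$, which is precisely property (GG).
\end{proof}
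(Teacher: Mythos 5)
Your reduction to Proposition~\ref{Prop_tec16a} is the natural starting point (it is also what the paper cites), but the bridging step in your proof is false. Proposition~\ref{Prop_tec16a} gives an apartment $A$ containing $\nu=\Delta_xT$ and a sub-Weyl chamber $S'$ of $S$, and you then claim that $S\cap B_\varepsilon(x)\subset S'$ for small $\varepsilon$ ``since $S'\subset S$''. This inverts the containment. A sub-Weyl chamber is a translate $S'=t+S$ based at some point $x'\in S$, and if $x'\neq x$ then $x\notin S'$ at all (the Weyl chamber is a pointed cone, so $-t$ does not lie in it); hence no neighborhood of $x$ in $S$ is contained in $S'$, for any choice of $\varepsilon$. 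This is not a removable bookkeeping issue: in the proof of Proposition~\ref{Prop_tec16a} the sub-Weyl chamber produced is deliberately based at a point $z$ chosen far from the base point of $S$ (so that a ball around $x$ fits inside $S^-$), so the apartment that proposition hands you genuinely need not contain $\Delta_xS$. Your own preliminary discussion identifies exactly this as ``the only genuinely delicate point,'' but the resolution you offer is the false containment.

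What the paper actually uses is the stronger statement that for two $x$-based Weyl chambers $S$ and $T$ there is an apartment containing \emph{all of} $S$ together with a germ of $T$ at $x$ --- this is Proposition~\ref{Prop_parreau1.15} (Parreau's Proposition~1.15, whose proof the paper refers to Parreau), and from it (GG) is immediate, which is the paper's two-line proof. To repair your argument you must upgrade ``$A$ contains a sub-Weyl chamber $S'$ of $S$'' to ``$A$ contains $S$''. One way: note that $x\in A$ (because $\Delta_xT\subset A$), let $B$ be an apartment containing $S$, and use (A2) to see that $A\cap B$ is a closed convex set containing $\{x\}\cup S'$; if convexity is understood as an intersection of half-apartments (as in Bennett and the author's thesis), every half-apartment containing $\{x\}\cup S'$ contains $S$, so $S\subset A$. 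Note that ordinary linear convexity would \emph{not} suffice here, since the linear convex hull of $\{x\}\cup S'$ misses the boundary directions of $S$. Alternatively one can invoke uniqueness of the $x$-based representative of a parallel class. Some argument of this kind is needed and is absent from your proof.
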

\begin{proof}
Let $S$ and $T$ be Weyl chambers both based at $x$. By Proposition~\ref{Prop_tec16a} there exists an apartment $A$ of $X$ containing $S$ and a germ of $T$ at $x$. Therefore $\Delta_xS$ and $\Delta_xT$ are both contained in the apartment $A$.
\end{proof}

For a proof of the next proposition compare p.13 in \cite{Parreau}.
\begin{prop}\label{Prop_parreau1.15}
Given two Weyl chambers $S$ and $T$ both based at $x$. Then there exists an apartment containing $S$ and a germ of $T$ at $x$. 
\end{prop}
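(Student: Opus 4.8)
The statement to prove is Proposition~\ref{Prop_parreau1.15}: given two Weyl chambers $S$ and $T$ both based at the same point $x$, there exists an apartment containing $S$ and a germ of $T$ at $x$. The key observation is that this is the special case of the preceding Proposition~\ref{Prop_tec16a} in which the germ $\mu$ is based at the \emph{same} vertex $x$ as $S$, strengthened so that the apartment contains all of $S$ rather than merely a sub-Weyl chamber of $S$. The plan is therefore to invoke Proposition~\ref{Prop_tec16a} with $\mu\define\Delta_xT$ to obtain an apartment $A'$ containing $\mu$ together with a sub-Weyl chamber $S'\subset S$, and then to upgrade ``contains $S'$'' to ``contains $S$''.

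\textbf{Key steps.} First I would apply Proposition~\ref{Prop_tec16a} to the Weyl chamber $S$ and the germ $\mu=\Delta_xT$. This yields an apartment $A'$ containing $\mu$ and some sub-Weyl chamber $S'$ of $S$. Since $S'$ is a sub-Weyl chamber of $S$, by Lemma~\ref{Lem_subWeyl}(1) it is a translate of $S$, and in particular $S'$ and $S$ share a common parallel class $\partial S=\partial S'$; moreover $S'$ and $S$ share the same germ at $x$ is \emph{not} automatic unless $S'$ is itself $x$-based, so the genuine content is that we must recover the full $x$-based chamber $S$ inside a suitable apartment. The clean way to do this: having $A'$ contain $\mu=\Delta_xT$ and the germ $\Delta_xS'=\Delta_xS$ (choosing $S'$ so that it is based at $x$, which one may arrange since $\mu$ is $x$-based and both germs sit at $x$), we apply property (GG)---available here by Corollary~\ref{Cor_GGa}---or directly re-run the germ-based argument to find an apartment containing the full chamber $S$ together with $\Delta_xT$. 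Concretely, inside $A'$ let $S'_{op}$ be the $x$-based Weyl chamber opposite $\Delta_xS$; by (CO) the chamber $S$ and $S'_{op}$ lie in a unique common apartment $A''$, and this $A''$ contains $S$ in its entirety. It then remains to check that $A''$ also contains the germ $\Delta_xT$, which follows because $\Delta_xT$ is contained in $A'$ and the two apartments $A'$ and $A''$ agree on a neighborhood of $x$ by axiom (A2).

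\textbf{Main obstacle.} The delicate point is bookkeeping the passage from a sub-Weyl chamber $S'\subset S$ back to the full chamber $S$ while simultaneously retaining the germ $\Delta_xT$. Proposition~\ref{Prop_tec16a} only guarantees a sub-Weyl chamber, so the essential work is to argue that the apartment can be enlarged (or replaced, via (CO) applied to opposite germs at $x$) so as to swallow all of $S$ without losing $\mu$. The cleanest route is to observe that every relevant object is based at the single vertex $x$, so the whole statement reduces to the assertion that $\Delta_xS$ and $\Delta_xT$ lie in a common apartment, which is exactly property (GG) established in Corollary~\ref{Cor_GGa}; once both germs are in one apartment $A'$, the unique $x$-based Weyl chamber of $A'$ with germ $\Delta_xS$ is opposite to some $x$-based chamber, and (CO) produces the apartment containing the full representative $S$ and the germ $\Delta_xT$ simultaneously. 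I expect the verification that this final apartment genuinely contains $S$ (and not merely $\Delta_xS$) to require the uniqueness clause of (CO) together with (A2), and this is where care is needed.
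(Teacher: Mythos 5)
The paper gives no proof of this proposition at all --- it simply cites p.~13 of Parreau's paper --- so your argument is necessarily a different (self-contained) route, built from results proved earlier in this section. Its core is correct and uses exactly the tools available here: by (GG) (Corollary~\ref{Cor_GGa}) there is an apartment $A'$ containing both germs $\Delta_xS$ and $\Delta_xT$; inside $A'$ take the $x$-based Weyl chamber $S'_{op}$ whose germ is opposite $\Delta_xS$ at $x$; then (CO), applied to the $x$-based chambers $S$ and $S'_{op}$ with opposite germs, produces a unique apartment $A''$ containing all of $S$ and all of $S'_{op}$. Note that the detour through Proposition~\ref{Prop_tec16a} in your first paragraph is both unnecessary and unjustified: that proposition only yields \emph{some} sub-Weyl chamber of $S$ in the apartment, and there is no way to ``arrange'' that it be $x$-based; fortunately you discard this route in favor of the (GG)+(CO) one.

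The one genuine weak point is your final step, the claim that ``$A'$ and $A''$ agree on a neighborhood of $x$ by axiom (A2).'' Axiom (A2) by itself gives only that $A'\cap A''$ is a closed convex set on which the charts differ by an element of $\aW$; two apartments through a common point certainly need not agree near that point (they may meet in a half-apartment whose wall contains $x$, or in a single Weyl chamber). What makes the conclusion true here is the specific content of the intersection, and the clean way to finish --- the same device the paper itself uses in Sections~\ref{Sec_A4} and \ref{Sec_A6'} --- is to pass to the spherical building $\Delta_xX$ (Theorem~\ref{Thm_residue}, available via Corollary~\ref{Cor_GGa}): the complexes $\Delta_xA'$ and $\Delta_xA''$ are two apartments of $\Delta_xX$ both containing the pair of opposite chambers $\Delta_xS$ and $\Delta_xS'_{op}$, and since two opposite chambers of a spherical building lie in a unique apartment, $\Delta_xA'=\Delta_xA''$. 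Hence $\Delta_xT$, being a chamber of $\Delta_xA'$, has an $x$-based representative in $A''$, i.e.\ $A''$ contains the germ of $T$ at $x$, as required. With this repair (or, alternatively, with an explicit convexity argument showing that $A'\cap A''$, containing the full chamber $S'_{op}$ together with the germ of the opposite chamber $S$, must contain a ball around $x$) your proof is complete.
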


\section{Verifying (A4)}\label{Sec_A4}

Assume that $(X,\App)$ is a pair satisfying axioms (A1) to (A3) and properties (GG) and (CO). Recall that we prove in Section~\ref{Sec_A3'} that the stronger axiom (A3') is then satisfied and that therefore the assertions of Section~\ref{Sec_lp} hold.
Alternatively we may assume that (A1), (A2), (A3') and (CO) are satisfied, which themselves imply property (GG). 
In Section~\ref{Sec_A3'} we did prove that these assumptions are enough to conclude that the germs at a given vertex form a spherical building. 

\begin{prop}
The pair $(X,\App)$ satisfies (A4). 
\end{prop}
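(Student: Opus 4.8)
Recall that axiom (A4) asks: given two Weyl chambers $S$ and $T$ in $X$, there exist sub-Weyl chambers $S'\subset S$ and $T'\subset T$ lying in a common apartment. Under the hypotheses of this section we have (by the results of Sections~\ref{Sec_A3'} and \ref{Sec_lp}) property (A3'), condition (CO), the existence of retractions centered at germs, the fact that $\Delta_xX$ is a spherical building for every $x$, and Proposition~\ref{Prop_tec16a}, which says that for any Weyl chamber $S$ and any germ $\mu$ of a Weyl chamber there is an apartment containing $\mu$ together with a sub-Weyl chamber of $S$. That last proposition is almost exactly what we want, except that it places a germ of $T$ (rather than a genuine sub-Weyl chamber of $T$) into the apartment.

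**The plan.** The plan is to apply Proposition~\ref{Prop_tec16a} with the roles arranged so that one of the two chambers is represented by its germ. Explicitly, let $S$ be based at $x$ and $T$ based at $y$, and set $\mu\define\Delta_yT$. By Proposition~\ref{Prop_tec16a} there is an apartment $A$ containing $\mu$ and a sub-Weyl chamber $S'\subset S$. It then remains to show that $A$ also contains a genuine sub-Weyl chamber of $T$, not merely its germ $\mu$. Since $\mu$ is the germ of the $y$-based chamber $T$ and $A$ contains $\mu$, there is a unique $y$-based Weyl chamber $T_A$ in $A$ with $\Delta_yT_A=\mu=\Delta_yT$. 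By the definition of sharing a germ, $T\cap T_A$ is a neighborhood of $y$ in both, and hence (invoking the intersection statements of Lemma~\ref{Lem_subWeyl}, which guarantee that two chambers sharing a germ meet in a common sub-Weyl chamber) $T\cap T_A$ contains a sub-Weyl chamber $T'$ of $T$. This $T'$ lies in $T_A\subset A$, so $A$ contains both $S'\subset S$ and $T'\subset T$, which is precisely (A4).

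**The main obstacle.** The delicate point is the passage from ``$A$ contains the germ $\mu=\Delta_yT$'' to ``$A$ contains an actual sub-Weyl chamber of $T$''. Having the same germ is a statement about coincidence in an arbitrarily small neighborhood of $y$, whereas a sub-Weyl chamber is an unbounded object; one must argue that the chamber $T_A\subset A$ selected by the germ and the original chamber $T$ agree not just infinitesimally but on a genuine sub-cone. I expect this to follow from the local-to-global behavior already encoded in Lemma~\ref{Lem_subWeyl} together with axiom (A2): two $y$-based Weyl chambers with the same germ, once placed in a common apartment via (A2) applied to $A$ and an apartment containing $T$, are translates of one another fixing the germ at $y$, and two such translates sharing a germ coincide on a sub-Weyl chamber. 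The only care needed is to ensure that the apartment produced by (A2) as the chart comparison can be taken to contain a sub-cone of $T$; this is handled by choosing the comparison on the convex intersection $A\cap(\text{apartment of }T)$, which contains the germ $\mu$ and hence a neighborhood of $y$ in $T$.

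**Conclusion.** Assembling these pieces gives sub-Weyl chambers $S'\subset S$ and $T'\subset T$ inside the single apartment $A$, establishing (A4). No use of the metric or of axiom (A5) is required beyond what is already available through the retractions of Section~\ref{Sec_retractions}, so the argument stays within the purely combinatorial framework of this part of the paper.
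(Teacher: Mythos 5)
Your argument breaks down at exactly the step you flag as ``the main obstacle'': the passage from ``the apartment $A$ contains the germ $\mu=\Delta_yT$'' to ``$A$ contains a genuine sub-Weyl chamber of $T$''. This implication is false, and it cannot be repaired by Lemma~\ref{Lem_subWeyl} or axiom (A2). Two $y$-based Weyl chambers sharing the same germ need not intersect in a sub-Weyl chamber: already in the rank-one case, where $X$ is a tree and Weyl chambers are rays, two rays emanating from $y$ may coincide on an initial segment (hence share the germ at $y$) and then branch apart, so that their intersection is a bounded segment containing no ray. Lemma~\ref{Lem_subWeyl}(2) does not apply, because it concerns two chambers that are translates of one another \emph{inside a common apartment} --- a hypothesis equivalent to parallelism by Proposition~\ref{Prop_parallel} --- whereas germ-sharing chambers are in general not parallel and not contained in any common apartment. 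Likewise (A2) only tells you that $A\cap B$ (with $B$ an apartment containing $T$) is a convex set containing a neighborhood of $y$ in $T$; in the tree picture this intersection is exactly the bounded initial segment, and no sub-ray of $T$ lies in $A$.

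This local-to-global gap is the whole content of (A4), and the paper's proof is built precisely to overcome it. There one first replaces $S$ and $T$ by parallel chambers based at a common point $x$ chosen so that the gallery distance $\delta(x)$ between their germs in the spherical building $\Delta_xX$ is \emph{maximal}; one then takes an apartment $A$ containing $T$ and a germ of $S$ at $x$ (Proposition~\ref{Prop_parreau1.15}), lets $S'$ be the chamber of $A$ opposite $S$ at $x$, and uses (CO) to obtain an apartment $A'$ containing $S\cup S'$. A computation with gallery distances, which uses the maximality of $\delta(x)$ in an essential way, shows that at every point $z$ of the convex set $A'\cap T$ the germ $\Delta_zT_z$ lies on a minimal gallery between the opposite germs $\Delta_zS_z$ and $\Delta_zS'_z$, hence inside the apartment $\Delta_zA'$ of $\Delta_zX$; convexity of $A'\cap T$ then propagates this germ-level statement to the conclusion $T\subset A'$. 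Your proposal, if it worked, would derive (A4) from Proposition~\ref{Prop_tec16a} alone with no maximality argument --- a strong hint that the shortcut is too cheap. Upgrading a germ to a sub-Weyl chamber requires the quantitative control that the maximal-$\delta$ choice provides; it is not automatic.
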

\begin{proof}
Let $S$ and $T$ be two Weyl chambers in $X$. We will show that passing to sub-Weyl chambers we will find an apartment containing both. 

Given a point $x\in T$ we denote by $S_x$, respectively $T_x$, the unique $x$-based Weyl chambers parallel to $S$, respectively $T$. We denote by $\delta(x)$ the length of a minimal gallery from $\Delta_xS$ to $\Delta_xT$ in the spherical building $\Delta_xX$. 
Since the number of possible values for $\delta(x)$ is finite we may without loss of generality (by choosing different sub-Weyl chambers of $C'$ if necessary) assume  that $x$ is chosen such that $\delta(x)$ is maximal.

Now replace $S$ by $S_x$ and $T$ by $T_x$ where $x$ is such that $\delta(x)$ is maximal. Now in particular both $S$ and $T$ are based at $x$.
We let $A$ be an apartment containing $T$ and a germ of $S$ at $x$, which exists by Proposition~\ref{Prop_parreau1.15}, and we denote by $S'$ the $x$-based Weyl chamber in $A$ which is opposite $S$ at $x$. Property (CO) implies that there is an apartment $A'$ containing $S$ and $S'$. By (A2) the intersection $A\cap T$ is a convex subset of $T$. 
Let $z$ be a point in this intersection. The unique $z$-based sub-Weyl chambers $S_z$ of $S$ and $S_z''$ of $S''$ are both contained in $A'$. 
By construction the length of a minimal gallery from $\Delta_zS_z$ to $\Delta_zT_z$ is not greater than $\delta(x)$. On the other hand, since $T$ and $S'$ are both contained in the apartment $A$, we can conclude
$$
\delta_z(T_z, S_z') = \delta_x(T, S') = d-\delta_x(S,T) = d-\delta(x)
$$
where $d$ is the diameter of an apartment of $\Delta_xX$, that is the diameter of the spherical Coxeter complex associated to the underlying root system $\RS$. The function $\delta_x$ assigns to two $x$-based Weyl chambers the length of a minimal gallery connecting their germs in $\Delta_xX$.

The germ $\Delta_zT_z$ lies on a minimal gallery in connecting the opposite germs $\Delta_zS_z$ and $\Delta_zS_z'$. Such a  minimal gallery is contained in the unique apartment containing $\Delta_zS_z$ and $\Delta_zS_z'$, which is $\Delta_zA'$. Therefore $\Delta_zT_z$ is contained in $\Delta_zA'$ as well. 
This allows us to conclude that $A'\cap T$ contains a germ of $T_z$. One can observe that $A'\cap T$ is a convex subset of $T$ containing $x$ which is open relative to $T_z$. Hence the Weyl chamber $T$ is contained in $A'$. Thus (A4) follows. 
\end{proof}

\section{Exchange condition}\label{Sec_A6'}

The following \emph{exchange condition}, abbreviated by (EC) and introduced by Bennett in \cite{Bennett2}, is equivalent to (A6) assuming that axioms (A1) to (A5) hold. Compare \cite{Bennett2} for a proof of this fact.

\begin{itemize}
 \item[(EC)] Given two apartments $A$ and $B$ intersecting in a half-apartment $M$ with boundary wall $H$, then $(A\oplus B)\cup H$ is also an apartment, where $\oplus$ denotes the symmetric difference. 
\end{itemize}

We may restate condition (EC) as follows:
Given charts $f_1, f_2$ such that $f_1(\MS)\cap f_2(\MS)\ddefine M$ is a half apartment, then there exists a chart $f_3$ such that $f_3(\MS)\cap f_i(\MS)$ is a half-apartment for $i=1,2$. Moreover $f_3(\MS)$ is the symmetric difference of $f_1(\MS)$ and $f_2(\MS)$ together with the boundary wall of $M$.

\begin{prop}
Assume that $(X,\App)$ is a pair satisfying axioms (A1) to (A3) and property (CO) and require that the germs at each vertex form a spherical building. (This is true if for example in addition (GG) holds.) 
Then (EC) is satisfied.
\end{prop}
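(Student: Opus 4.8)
The plan is to reduce everything to the local picture provided by the spherical buildings $\Delta_x X$ and then to glue. Write $A=f_1(\MS)$, $B=f_2(\MS)$ and $M=A\cap B$, a half-apartment with wall $H$. Inside $A$ the wall $H$ bounds $M$ and the complementary closed half-apartment, which I call $M_A$ (so $A=M\cup M_A$ and $M\cap M_A=H$); similarly $B=M\cup M_B$. Since $(A\oplus B)\cup H=M_A\cup M_B$, the claim is precisely that there is a chart in $\App$ with image $M_A\cup M_B$.

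First I would fix a point $x$ in the relative interior of $H$, i.e.\ a point lying on $H$ but on no other wall of $A$ or of $B$. By Theorem~\ref{Thm_residue} the germs at $x$ form a spherical building $\Delta_x X$, and $\Delta_x A$, $\Delta_x B$ are apartments of it. Because $A\cap B=M$ and $x$ is generic on $H$, these two apartments meet exactly in the root (half-apartment) $\Delta_x M$, whose bounding wall is the germ of $H$; the two complementary roots are $\Delta_x M_A\subset\Delta_x A$ and $\Delta_x M_B\subset\Delta_x B$. The next step uses a standard fact about spherical buildings applied to $\Delta_x X$ (no affine axiom beyond those already available enters here): if two apartments of a spherical building meet in a root, then the union of the two complementary roots is again an apartment. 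Thus $\Sigma\define\Delta_x M_A\cup\Delta_x M_B$ is an apartment of $\Delta_x X$. In the sphere $\Sigma$ I pick a chamber $\mu$ in the interior of $\Delta_x M_A$; its antipode $\nu$ then lies in the interior of $\Delta_x M_B$, and $\mu$, $\nu$ are opposite in $\Delta_x X$. Let $S$ and $T$ be the $x$-based Weyl chambers representing $\mu$ and $\nu$; they lie in $M_A$ and $M_B$ respectively and have opposite germs at $x$. Property (CO) now yields a \emph{unique} apartment $A'$ containing $S$ and $T$, and since $\mu,\nu$ are opposite we obtain $\Delta_x A'=\Sigma=\Delta_x M_A\cup\Delta_x M_B$.

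It remains to identify $A'$ globally, and this is the step I expect to be the main obstacle: I must upgrade the equality $\Delta_x A'=\Delta_x M_A\cup\Delta_x M_B$ of germs at $x$ to the equality of apartments $A'=M_A\cup M_B$. My plan is to show $A\cap A'=M_A$ and, symmetrically, $B\cap A'=M_B$. By axiom (A2) the set $A\cap A'$ is closed and convex, and the retraction $r_{A',\mu}$ of Definition~\ref{Def_vertexRetraction} restricts, by Proposition~\ref{Prop_r}, to an isomorphism $A\to A'$ that is the identity precisely on $A\cap A'$ (a point of $A$ is fixed iff it already lies in $A'$). Hence $r_{A',\mu}|_A$ is a folding of the apartment $A$ onto $A'$, and its fixed-point set $A\cap A'$ is a closed convex subset of $A$ whose germ at $x$ is, by the computation $\Delta_x A\cap\Delta_x A'=\Sigma_A\cap\Sigma=\Delta_x M_A$, exactly the half-apartment germ bounded by the germ of $H$. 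Since $x$ lies on no wall other than $H$, the only wall along which this folding can bend is $H$; combined with convexity this forces the fixed-point set to be exactly the half-apartment $M_A$. Making precise that a folding whose fixed germ is a single half-apartment germ has fixed-point set a genuine half-apartment is the technical heart of the argument.

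The same reasoning with the roles of $A,B$ and $\mu,\nu$ interchanged gives $B\cap A'=M_B$. Finally, every $x$-based Weyl chamber of $A'$ has germ lying in $\Delta_x M_A$ or in $\Delta_x M_B$, so by the two identifications just obtained it is contained in $M_A$ or in $M_B$; as the $x$-based Weyl chambers cover $A'$, this gives $A'\subseteq M_A\cup M_B$, and hence $A'=M_A\cup M_B=(A\oplus B)\cup H$. A chart in $\App$ for $A'$ is thus a chart whose image is $(A\oplus B)\cup H$, which is exactly the assertion of (EC).
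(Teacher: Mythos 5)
Up to the construction of the apartment $A'$, your argument is the paper's own proof step for step: a base point $x$ on the wall $H$, the glued apartment $\Sigma=\Delta_xM_A\cup\Delta_xM_B$ inside the spherical building $\Delta_xX$ of Theorem~\ref{Thm_residue}, a pair of opposite chamber germs represented by $x$-based Weyl chambers lying in $M_A$ and $M_B$, and property (CO) to produce the unique apartment $A'$ (the paper's $A''$) with $\Delta_xA'=\Sigma$. The difference lies entirely in the last step, the passage from the germ-level identity $\Delta_xA'=\Sigma$ to the global identity $A'=(A\oplus B)\cup H$.

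That step is exactly where your proof stops being a proof, and you say so yourself. You never establish $A\cap A'=M_A$; the folding argument is only sketched, and its key claim cannot be deduced from the properties you list. The fixed-point set $A\cap A'$ of $r_{A',\mu}\vert_A$ is indeed closed, convex, contains the full chamber $S$, and has germ $\Delta_xM_A$ at $x$, but in rank at least $2$ these properties do not characterize $M_A$: take a wall $H'$ of $A$ through $x$ different from $H$ with $S$ on its positive side, translate it slightly away from $S$, and let $M'$ be the half-apartment it bounds on the side of $S$; then $M_A\cap M'$ is closed, convex, contains $S$, and has germ $\Delta_xM_A$ at $x$, yet is a proper subset of $M_A$. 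So the assertion that ``the only wall along which the folding can bend is $H$'' is not a consequence of convexity plus the germ computation --- nothing in your sketch excludes bending along walls that miss $x$ --- and it is essentially equivalent to what you are trying to prove. Note also that your route needs the retraction $r_{A',\mu}$ of Definition~\ref{Def_vertexRetraction} and Proposition~\ref{Prop_r}, hence axiom (A3''), which is not among the hypotheses of the proposition and must first be derived (Section~\ref{Sec_A3'}); the paper's proof uses no retractions at all. Instead, the paper argues that by (A2) the convex set $A\cap A''$ contains the whole Weyl chamber lying in $M_A$ together with all chamber germs of the root $\Delta_x(A\setminus M)$, concludes $A\setminus M\subseteq A''$ and symmetrically $B\setminus M\subseteq A''$, and then observes that $M_A$ and $M_B$ are complementary half-apartments of $A''$, so that $A''=(A\oplus B)\cup H$. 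Whatever one thinks of the brevity of that convexity step in the paper, your proposal replaces it by an explicitly unproven claim, so as it stands the argument is incomplete at its crux.
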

\begin{proof}
Let $A$ and $B$ be apartments intersecting in an half-apartment $M$. Let $x$ be a point contained in the bounding wall $H$ of $M$. By assumption $\Delta_xX$ is a spherical building. Therefore the union of $\Delta_x(A\setminus M)$,  $\Delta_x(B\setminus M)$ and $\Delta_xH$ is an apartment in $\Delta_xX$, which we denote by $\Delta_xA'$. 

We choose two opposite germs $\mu$ and $\sigma$ at $x$ which are contained in $\Delta_x(A\setminus M)$ and $\Delta_x(B\setminus M)$, respectively. Let $T$ be the unique Weyl chamber in $A$ having germ $\mu$ and let $S$ be the unique Weyl chamber in $B$ with germ $\sigma$. By construction an condition (CO) the Weyl chambers $S$ and $T$ are contained in a common apartment $A''$. Since two opposite Weyl chambers contained in the same apartment determine this apartment uniquely we can conclude that  $\Delta_xA''=\Delta_xA'$. We conclude that $A''\cap ((A\oplus B)\cup H)$ contains $S$, $T$ and $\Delta_xA'$. Axioms (A2) says that apartments intersect in convex sets. Therefore $A''\cap (B\setminus M) = B\setminus M$ and $A''\cap (A\setminus M) = A\setminus M)$ which implies that $A''\cap ((A\oplus B)\cup H) = A''$.
\end{proof}

\phantomsection
\renewcommand{\refname}{Bibliography}
\bibliography{literaturliste}

\begin{thebibliography}{Hit09b}

\bibitem[AB08]{AB}
P.~Abramenko and K.~S. Brown.
\newblock {\em Buildings, {T}heory and applications}, volume 248 of {\em
  Graduate Texts in Mathematics}.
\newblock Springer, New York, 2008.

\bibitem[Ben90]{BennettDiss}
C.~D. Bennett.
\newblock Affine {$\Lambda$}-buildings.
\newblock {\em Dissertation, Chicago Illinois}, 106 pp., 1990.

\bibitem[Ben94]{Bennett}
C.~D. Bennett.
\newblock Affine {$\Lambda$}-buildings. {I}.
\newblock {\em Proc. London Math. Soc. (3)}, 68(3):541--576, 1994.

\bibitem[Ben09]{Bennett2}
C.~D. Bennett.
\newblock Affine {$\Lambda$}-buildings {II}: a reduction of axioms.
\newblock {\em arXiv:0909.2059v1}, 2009.

\bibitem[Bro89]{Brown}
K.~S. Brown.
\newblock {\em Buildings}.
\newblock Springer-Verlag, New York, 1989.

\bibitem[Hit09a]{Diss}
P.~Hitzelberger.
\newblock Generalized affine buildings: Automorphisms, affine
  suzuki-ree-buildings and convexity.
\newblock {\em arXiv:0902.1107v1}, 2009.

\bibitem[Hit09b]{Convexity2}
P.~Hitzelberger.
\newblock Nondiscrete affine buildings and convexity.
\newblock {\em arXiv:0906.4925v1}, 2009.

\bibitem[KL97]{KleinerLeeb}
B.~Kleiner and B.~Leeb.
\newblock Rigidity of quasi-isometries for symmetric spaces and {E}uclidean
  buildings.
\newblock {\em Inst. Hautes \'Etudes Sci. Publ. Math.}, (86):115--197, 1997.

\bibitem[KT04]{KramerTent}
L.~Kramer and K.~Tent.
\newblock Asymptotic cones and ultrapowers of {L}ie groups.
\newblock {\em Bull. Symbolic Logic}, 10(2):175--185, 2004.

\bibitem[Par00]{Parreau}
A.~Parreau.
\newblock Immeubles affines: construction par les normes et \'etude des
  isom\'etries.
\newblock In {\em Crystallographic groups and their generalizations (Kortrijk,
  1999)}, volume 262 of {\em Contemp. Math.}, pages 263--302. Amer. Math. Soc.,
  Providence, RI, 2000.

\bibitem[Ron89]{Ronan}
M.~Ronan.
\newblock {\em Lectures on buildings}, volume~7 of {\em Perspectives in
  Mathematics}.
\newblock Academic Press Inc., Boston, MA, 1989.

\bibitem[SS09]{BaseChange}
P.~Schwer and K.~Struyve.
\newblock {$\Lambda$}-buildings and base change functors.
\newblock {\em preprint}, 2009.

\bibitem[Tit86]{TitsComo}
J.~Tits.
\newblock Immeubles de type affine.
\newblock In {\em Buildings and the geometry of diagrams (Como, 1984)}, volume
  1181 of {\em Lecture Notes in Math.}, pages 159--190. Springer, Berlin, 1986.

\end{thebibliography}
\bibliographystyle{alpha}

\end{document}